
\documentclass{article}
\usepackage{amscd,amssymb}
\usepackage{amsfonts}
\usepackage{hyperref}
\usepackage{color}
\usepackage[portuguese]{babel}
\usepackage[ansinew]{inputenc}
\usepackage{bbm}


\newtheorem{theorem}{Theorem}

\newtheorem{definition}[theorem]{Definition}

\newtheorem{lemma}[theorem]{Lemma}

\newtheorem{proposition}[theorem]{Proposition}

\newenvironment{proof}[1][Proof]{\noindent\textbf{#1.} }{\ \rule{0.5em}{0.5em}}

\begin{document}

\title{Orbit equivalence of linear systems on manifolds and semigroup actions on homogeneous spaces\thanks{This work was partially supported by
CNPq/Universal grant n$^{\circ }$ 476024/2012-9}}
\author{R. M. Hungaro\\Universidade Estadual de Maring\'{a} - Brazil, \and  O.G. Rocio
\\Universidade Estadual de
Maring\'{a} - Brazil, \and A. J. Santana\thanks{Partially supported
by Funda\c{c}\~{a}o Arauc\'{a}ria grant n$^{\circ }$
$20134003$}\\Universidade Estadual de Maring\'{a} - Brazil}

\maketitle

\begin{abstract}
 In this paper we introduce the notion  of orbit equivalence for semigroup actions
 and the concept of generalized linear control system on  smooth manifold.   The main
goal   is to prove that, under certain conditions, the semigroup
system  of a generalized linear control system on a smooth manifold
is orbit equivalent to the semigroup system of a linear control
system on a homogeneous space.
\end{abstract}

\noindent {}\noindent \textit{AMS 2010 subject classification}:
20M99, 37A20, 57S25, 93B05, 93B99, 93C99 \newline \noindent
\textit{Key words:} \textit{Control systems, orbit equivalence, Lie
groups, homogeneous spaces}

\section{Introduction}

Although the control theory originated about a century ago,  there
is no global theory yet with general hypothesis. However, in special
cases, the study of control theory have made rapid progress in the
last decades. For example, the control theory on Lie groups has
achieved significant advances due especially its relationship with
the actions of semigroups on Lie groups, implying in good results in
the study of control sets and controllability (see e.g. Elliott
\cite{elliott}, Jurdjevic \cite{jurdjevic}, Rocio, San Martin and
Santana \cite{RoSaSacone}, Rocio, Santana and Verdi \cite{RoSaVe}
and Sachkov \cite{sachkov}).

Until the 1990s the theory of control systems on Lie groups was
restricted, basically, to the  control system of invariant vector
fields. In Ayala and Tirao \cite{ayti}, this study was expanded with
the introduction of linear control systems on Lie groups and
developed rapidly in recent years, the first papers on this subject
concern  about controllability (see e.g. \cite{aysm}, \cite{ayti}
and \cite{markus}).  In Jouan \cite{jouan1}, considering a control
system on a manifold given by complete linear vector fields that
generate a finite dimensional Lie algebra, it was showed a
equivalence between this system and a linear control system on
homogeneous space.

In our paper, initially we formalize the notion of orbit equivalence
for semigroups actions on manifolds. Then we establishes conditions
for an action of a semigroup of a control system on a manifold is
orbit equivalent to the action of a semigroup in a homogeneous
space. In the sequence, we introduce the concept of linear control
system on manifold, called generalized linear control system. The
main result of this paper establishes conditions under which the
action of the semigroup associated to the generalized linear control
system is orbit equivalent to the action of a semigroup on a
homogeneous space.

We  now touch some    control theoretic aspects related with our
work. Consider $G$   a (finite dimensional) connected and simply
connected Lie group. Suppose that $G$ acts  transitively on a
manifold $M$ and take $H$ a closed subgroup. Let $\pi:G \rightarrow
G/H$ be the canonical projection. A linear control system on $G/H$
is a special case of control systems  where the drift is
$\pi$-related with a linear vector field on $G$ and the controlled
vector fields are
projections of right invariant vector fields on $G$. Take  $\mathfrak{g%
}$ the Lie algebra    given by the right invariant vector fields on
$G$.
Using the same notations of Ayala and San Martin \cite{aysm} and \cite{ayti}   and denoting by $%
e$ the identity of $G$, a vector field ${\mathcal{X}}$ on $G$ is
called \emph{linear} if for all $Y\in \mathfrak{g}$ we have
$[{\mathcal{X}},Y]\in \mathfrak{g}$ and ${\mathcal{X}}(e)=0$. Hence
a \emph{linear control system} on $G$ is defined as
\[
\dot{x}={\mathcal{X}}(x)+\sum_{j=1}^{m}u_{j}Y_{j}(x),
\]
where the drift ${\mathcal{X}}$ is a linear vector field on $G$,
$Y_{1}, Y_{2}, \ldots , Y_{m} \in \mathfrak{g}$ and $u=\{u_{1},
u_{2}, \ldots , u_{m} \} \in  \mathbb{R}^{n}$.

We recall the definition of linear control system on $G/H$ given  in
\cite{jouan1}. A vector field on $G/H$ is called \emph{invariant} if
it is the ${\pi}_{\ast}$-image of some  right invariant vector field
on $G$ and is called \emph{linear} if is $\pi $-related with a
linear vector field on $G$. Hence if the drift of a control system
on $G/H$ is linear and the controlled vector fields are invariant
then the system is called \emph{linear control system} on
$\frac{G}{H}$.

In this direction, the main concept of our paper is the generalized
linear control system on manifolds. Take $\mathcal{L} (TM)$ the Lie
algebra of the differentiable vector fields on  $M$. A
\emph{generalized linear control system} on $M$ is a control system
 \[ \dot{x}=\mathcal{F} (x)
 +\sum_{j=1}^{m}u_{j}Y_{j}(x) , \] where
$\mathcal{F}, Y_{j} \in \mathcal{L} (TM)$ for every $j=1, \dots ,
m$, $\Gamma=\{Y_{1},\ldots ,Y_{m}\}$ generates a finite dimensional
Lie subalgebra $\mathcal{L}(\Gamma)$ of $\mathcal{L}(TM)$, every
vector field $Y_{i}\in \Gamma$ is complete,
 $[\mathcal{F},X]\in \mathcal{L} (\Gamma)$ for all $X\in \mathcal{L} (\Gamma)$
 and there exists  $x_{0}\in M$ such that
$\mathcal{F}_{x_{0}}=0$. The motivation to study these systems come
from the need to formalize concepts involved in control theory on
manifolds that transfer several issues, such as the controllability,
 to be treated in more pleasant state space such as  Lie groups.

About the structure of this paper, in the second section we
introduce the notion of orbit equivalence and topological conjugacy
for semigroup actions and give some properties related with control
sets. In the third section we fix the control theoretic notations
and relates state equivalent control systems with diffeomorphic
control systems. In the fourth section we prove that given a control
system on $M$, the semigroup system on $M$ is orbit equivalent to a
semigroup action on a homogeneous space. In the last section we
prove our main result which states that supposing $\Gamma$  is
transitive on $M$ and taking $G$ the connected and simply connected
Lie group with Lie algebra $\mathcal{L} (\Gamma)$, then the
semigroup system of the above system is orbit equivalent to a
semigroup system of a linear control system on a $G$-homogeneous
space.

\section{Orbit equivalence}

In this section, we define the notions of orbit equivalence for
semigroups actions and topological conjugacy for skew product, this
concepts will be necessary in the next sections. We establish some
relations between orbit equivalence and control sets for semigroups
actions. We begin recalling some concepts of the theory of control
sets (for more details see e.g. San Martin \cite{san martin} and San
Martin and Tonelli \cite{san martin e tonelli}).Take a non empty
semigroup $S$ acting on a topological space $M$. The semigroup $S$
is said to be accessible if ${\rm int}Sx \neq \emptyset$ for every
$x \in M$. A control set for the $S$-action on $M$ is a subset $C
\subset M$ such that ${\rm int}C \neq \emptyset$, $C \subset {\rm
cl}(Sx)$ for all $x \in C$ and $C$ is maximal with the first two
properties. If ${\rm cl}C = {\rm cl}(Sx)$ for all $x \in C$,  the
control set $C$ will be named invariant control set. We also recall
the partial ordering between control sets  given by $C_{1} < C_{2}$
if there exists $x \in C_{1}$ such that ${\rm cl}(Sx) \cap C_{2}
\neq \emptyset$.

Now about equivalence of semigroups we have the following
definitions.

\begin{definition}
Let $M_{1}$ and $M_{2}$ be topological spaces. Consider $S$ and $T$
semigroups. The actions $(M_{1},S)$ and $(M_{2},T)$ are called orbit
equivalent, if there exists an homeomorphism $f:M_{1} \rightarrow
M_{2}$ such that $f(Sx)=Tf(x)$ for all $x \in M_{1}$. The map $f$ is
called orbit equivalence map.
\end{definition}

Some authors call the pair $(M,S)$ as transformation semigroup (see
e.g. Ellis in \cite{ellis} and Sousa in \cite{josiney})

 Locally, we have that the actions $(M_{1},S)$ and
$(M_{2},T)$ are called orbit equivalent restricted to a subset $C
\subset M_{1}$ if there exists an homeomorphism $f:M_{1} \rightarrow
M_{2}$ such that $f(Sx)=Tf(x)$ for all $x \in C$.

Now supposing the existence of control sets we give some properties
of orbit equivalent actions. Recall that taking the topological
space as flag manifolds, there exist always control sets (see e.g.
\cite{san martin} and \cite{san martin e tonelli}).

\begin{proposition}
\label{preservecs} Suppose that  $(M_{1},S)$ and $(M_{2},T)$ are
orbit equivalent. Hence if $C_{S}$ is a control set for $S$ then
$f(C_{S})$ is a control set for $T$ in $M_{2}$. On the other hand,
if $C_{T}$ is a control set for $T$ in $M_{2}$ then $f^{-1}(C_{T})$
is a control set for $S$ in $M_{1}$ .
\end{proposition}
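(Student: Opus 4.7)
The plan is to verify each of the three defining conditions of a control set transfers across the orbit equivalence $f$. Since $f$ is a homeomorphism with $f(Sx)=Tf(x)$ for every $x\in M_1$, this will be essentially a bookkeeping argument using the two key facts that (i) $f$ sends open sets to open sets and (ii) $f$ commutes with closure, i.e., $f(\mathrm{cl}\,A)=\mathrm{cl}\,f(A)$.

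First I would handle the nonempty interior: $\mathrm{int}\,C_S\neq\emptyset$, and because $f$ is a homeomorphism, $f(\mathrm{int}\,C_S)$ is an open subset of $f(C_S)$, so $\mathrm{int}\,f(C_S)\neq\emptyset$. Next, for the orbit accessibility condition, let $y\in f(C_S)$, write $y=f(x)$ with $x\in C_S$. From $C_S\subset\mathrm{cl}(Sx)$, apply $f$ to get $f(C_S)\subset f(\mathrm{cl}(Sx))=\mathrm{cl}(f(Sx))=\mathrm{cl}(Tf(x))=\mathrm{cl}(Ty)$. Thus $f(C_S)\subset\mathrm{cl}(Ty)$ for every $y\in f(C_S)$.

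For maximality, I would argue by contradiction. Suppose there is a set $D\subset M_2$ with $f(C_S)\subsetneq D$ satisfying $\mathrm{int}\,D\neq\emptyset$ and $D\subset\mathrm{cl}(Ty)$ for every $y\in D$. Consider $f^{-1}(D)\subset M_1$. Since $f^{-1}$ is also a homeomorphism and is itself an orbit equivalence from $(M_2,T)$ to $(M_1,S)$ (the relation $f(Sx)=Tf(x)$ rewrites as $f^{-1}(Ty)=Sf^{-1}(y)$), the symmetric argument of the previous paragraph shows that $f^{-1}(D)$ has nonempty interior and $f^{-1}(D)\subset\mathrm{cl}(Sz)$ for every $z\in f^{-1}(D)$. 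But $f^{-1}(D)\supsetneq C_S$, contradicting the maximality of $C_S$. Therefore $f(C_S)$ is maximal, hence a control set for $T$.

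The converse statement, that $f^{-1}(C_T)$ is a control set for $S$ whenever $C_T$ is a control set for $T$, follows immediately by applying the result just proved to the orbit equivalence $f^{-1}\colon M_2\to M_1$. The main (and only nontrivial) obstacle is the maximality step, and the right way to phrase it is to note that orbit equivalence is symmetric: the inverse of an orbit equivalence map is again an orbit equivalence map, which removes all asymmetry and lets the three conditions be checked in a uniform way.
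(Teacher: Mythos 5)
Your proof is correct and follows the same direct-verification strategy as the paper's, which likewise checks that $f$ preserves nonempty interior and transports $C_{S}\subset\mathrm{cl}(Sx)$ to $f(C_{S})\subset\mathrm{cl}(Ty)$ via $f(\mathrm{cl}\,A)=\mathrm{cl}\,f(A)$. In fact yours is the more complete argument: the paper's two-line proof stops after those first two conditions and never addresses maximality, whereas your observation that $f^{-1}$ is itself an orbit equivalence (since $f(Sx)=Tf(x)$ rewrites as $f^{-1}(Ty)=Sf^{-1}(y)$) supplies exactly the missing maximality step and also makes the converse direction immediate rather than merely ``analogous.''
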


\begin{proof}
Note that $\mathrm{int}(f(C_{S})) \neq \emptyset$ and $f(C_{S})
\subset \mathrm{fe}(Ty)$ for every $y \in
f(C_{S})$. By hypotheses it follows $\mathrm{fe}%
(f(Sx)) \subset \mathrm{fe}(Tf(x))$, for all $x \in C_{S}$. The
proof of the converse is analogous.
\end{proof}

Moreover, the orbit equivalence preserves the order of control sets.

\begin{proposition}
The  topological conjugacy preserves the order of control sets.
\end{proposition}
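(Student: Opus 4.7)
The plan is to reduce directly to the definition of the order relation on control sets and push the witness through the homeomorphism, using Proposition \ref{preservecs} to identify the image sets as control sets on the target. Note that the preamble to the statement asserts this about orbit equivalence, and the only structural feature needed in the argument below is that the equivalence is a homeomorphism intertwining the two semigroup actions; any reasonable notion of topological conjugacy for these actions (including the one suggested at the start of the section) will supply exactly this property.

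Concretely, let $C_1, C_2$ be control sets for $S$ on $M_1$ with $C_1 < C_2$, and let $f : M_1 \to M_2$ be the equivalence. By definition of the order, there exists $x \in C_1$ such that $\mathrm{cl}(Sx) \cap C_2 \neq \emptyset$. By Proposition \ref{preservecs}, $f(C_1)$ and $f(C_2)$ are control sets for $T$ on $M_2$, so to conclude $f(C_1) < f(C_2)$ it suffices to produce a point $z \in f(C_1)$ with $\mathrm{cl}(Tz) \cap f(C_2) \neq \emptyset$.

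The natural candidate is $z = f(x)$. Pick any $y \in \mathrm{cl}(Sx) \cap C_2$. Since $f$ is a homeomorphism, closure commutes with $f$, giving $f(\mathrm{cl}(Sx)) = \mathrm{cl}(f(Sx))$; by orbit equivalence, $f(Sx) = Tf(x) = Tz$, so $f(y) \in \mathrm{cl}(Tz)$. At the same time, $y \in C_2$ yields $f(y) \in f(C_2)$. Hence $f(y)$ witnesses $\mathrm{cl}(Tz) \cap f(C_2) \neq \emptyset$, proving $f(C_1) < f(C_2)$. The reverse implication, starting from control sets $D_1 < D_2$ for $T$ on $M_2$ and using $f^{-1}$, is entirely symmetric.

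There is no serious obstacle here: the proof is essentially a bookkeeping exercise that combines Proposition \ref{preservecs} with the elementary fact $f(\mathrm{cl}(A)) = \mathrm{cl}(f(A))$ for a homeomorphism $f$. The only point that requires a moment of care is making sure the chosen witness $x \in C_1$ produced by the order on $M_1$ maps to a usable witness in $f(C_1)$, which is automatic because $f$ is a bijection.
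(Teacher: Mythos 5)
Your proof is correct and follows essentially the same route as the paper's: both take the witness $x\in C_1$ from the definition of the order, use $f(Sx)=Tf(x)$ together with the fact that a homeomorphism carries closures to closures to show $\mathrm{cl}(Tf(x))\cap f(C_2)\neq\emptyset$, and invoke the earlier proposition to know $f(C_1)$ and $f(C_2)$ are control sets. The only cosmetic difference is that you track an explicit witness point $f(y)$ where the paper chains set inclusions, and you also flag the statement's conflation of ``topological conjugacy'' with orbit equivalence, which the paper's own proof silently makes as well.
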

\begin{proof}
Take the $S$ control sets $C_{1}$ and $C_{2}$. Suppose that $C_{1} <
C_{2}$, then there exists $x \in C_{1}$ such that ${\rm fe}(Sx) \cap
C_{2} \neq \emptyset$. Take $f:M_{1} \rightarrow M_{2}$ a
topological conjugace of the actions $(M_{1},S)$ and  $(M_{2},T)$
and consider
 the control sets $f(C_{1})$ and $f(C_{2})$ for the $T$ action. Take
 $f(x) \in f(C_{1})$ then ${\rm fe}(Tf(x)) \cap f(C_{2})={\rm fe}(f(Sx))
 \cap f(C_{2}).$
 But $\emptyset \neq f({\rm fe}(Sx)  \cap C_{2}) \subset f({\rm fe}(Sx))  \cap f(C_{2}) \subset {\rm fe}(f(Sx))
 \cap f(C_{2}).$
 Then ${\rm fe}(Tf(x)) \cap f(C_{2}) \neq \emptyset$.
\end{proof}

For the next proposition we recall the definition of the set of
transitivity $C^{0}$ of a control set $C$: $C^{0}=\{ x \in C : x \in
({\rm int}S)x \}$. It holds $C^{0}= ({\rm int}S)x$ for all $x \in
C^{0}$ (see \cite{san martin}).

\begin{proposition}
With the same notations, suppose that there exists a homeomorphism $f:M_{1}
\rightarrow M_{2}$ that send set of transitivity in set of transitivity,
that is, if $C \subset M_{1}$ is the $S$ invariant control set and $C^{0}$
its set of transitivity then $f(C)$ is the invariant control set for $T$
with $f(C^{0})$ its set of transitivity. Suppose also that $S$ and $T$ are
accessible. With this hypotheses we have $(M_{1},\mathrm{int}S)$ and $(M_{2},%
\mathrm{int}T)$ are orbit equivalent restricted to $C$.
\end{proposition}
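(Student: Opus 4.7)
The plan is to verify directly that $f((\mathrm{int}\,S)x) = (\mathrm{int}\,T)f(x)$ for every $x \in C$, working almost entirely from the recalled identity $(\mathrm{int}\,S)z = C^{0}$ (valid for $z \in C^{0}$) together with its $T$-analogue on $f(C)$. The homeomorphism $f$ is used only through the hypothesis that it sends the transitivity set of $C$ to the transitivity set of $f(C)$, so no further regularity of $f$ should enter.

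First I would handle the case $x \in C^{0}$, which is the core of the argument. The recalled fact gives $(\mathrm{int}\,S)x = C^{0}$, so applying $f$ yields $f((\mathrm{int}\,S)x) = f(C^{0})$. By hypothesis $f(C^{0})$ is precisely the set of transitivity of the invariant control set $f(C)$ for $T$, and $f(x) \in f(C^{0})$; applying the same recalled identity on the $M_{2}$-side (this is where the accessibility of $T$ enters) gives $(\mathrm{int}\,T)f(x) = f(C^{0})$. Chaining the two equalities proves the claim for every point of $C^{0}$.

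To upgrade the conclusion from $C^{0}$ to all of $C$, I would establish the inclusion $(\mathrm{int}\,S)x \subseteq C^{0}$ for every $x \in C$. This is the step I expect to be the main obstacle. The idea is that $\mathrm{int}\,S$ is a subsemigroup of $S$ and $C$ is invariant, so for $g \in \mathrm{int}\,S$ and $x \in C$ the point $y = gx$ lies in $C$ and satisfies $(\mathrm{int}\,S)y \subseteq (\mathrm{int}\,S)x$; the accessibility of $S$, combined with $C$ being the \emph{unique} invariant control set, should force $y$ into its transitivity set $C^{0}$. Granted this inclusion (and its $T$-counterpart), the same chain of equalities as above extends to $f((\mathrm{int}\,S)x) = f(C^{0}) = (\mathrm{int}\,T)f(x)$ for arbitrary $x \in C$, which is exactly the definition of orbit equivalence of $(M_{1},\mathrm{int}\,S)$ and $(M_{2},\mathrm{int}\,T)$ restricted to $C$.
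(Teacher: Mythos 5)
Your treatment of the case $x\in C^{0}$ is correct and is in substance the paper's own first step, just phrased at the level of sets rather than elements: $f((\mathrm{int}\,S)x)=f(C^{0})=(f(C))^{0}=(\mathrm{int}\,T)f(x)$. The problem is the extension to all of $C$, which you rightly identify as the main obstacle but then do not actually close. Two things are missing. First, the inclusion $(\mathrm{int}\,S)x\subseteq C^{0}$ for $x\in C$ is only asserted, and the justification you sketch does not deliver it: the observation $(\mathrm{int}\,S)y\subseteq(\mathrm{int}\,S)x$ for $y=gx$ points in the wrong direction (what is needed is the recurrence statement $y\in(\mathrm{int}\,S)y$, not a comparison of the two orbits), and ``$C$ is the unique invariant control set'' is neither a hypothesis of the proposition nor relevant to it. The standard argument in this setting ($S$ a subsemigroup with nonempty interior of a group acting on $M$) is: $y=gx\in \mathrm{cl}(Sx)=\mathrm{cl}(C)=C$ by invariance and closedness of invariant control sets; then, since $x\in\mathrm{cl}(Sy)$ and $g$ lies in an open set $V\subseteq S$, the open neighbourhood $V^{-1}y$ of $x$ meets $Sy$, giving $v\in V$ and $s\in S$ with $vsy=y$ and $vs\in\mathrm{int}\,S$, whence $y\in C^{0}$. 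Second, even granting the inclusion, your final chain $f((\mathrm{int}\,S)x)=f(C^{0})=(\mathrm{int}\,T)f(x)$ silently uses the \emph{equality} $(\mathrm{int}\,S)x=C^{0}$ for every $x\in C$; the reverse inclusion needs one more line (pick $hx\in(\mathrm{int}\,S)x\subseteq C^{0}$ with $h\in\mathrm{int}\,S$; then $C^{0}=(\mathrm{int}\,S)hx\subseteq(\mathrm{int}\,S)x$ because $\mathrm{int}\,S$ is an ideal of $S$), and the analogous equality must be invoked on the $T$-side.

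For comparison, the paper passes from $C^{0}$ to $C$ by a different device: it uses $C=\mathrm{cl}(C^{0})$, approximates $x\in C$ by a sequence $x_{n}\in C^{0}$, and takes limits in $f((\mathrm{int}\,S)x_{n})=(\mathrm{int}\,T)f(x_{n})$. Your route, once the inclusion and the resulting equality $(\mathrm{int}\,S)x=C^{0}$ are actually proved, is a legitimate and arguably cleaner alternative, since it avoids any limiting argument; but as written it rests on an unproved key step whose supporting sketch would not succeed.
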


\begin{proof}
Take $x \in C_{0}$ and $a \in \mathrm{int}S$, then $%
f(ax) \in f(C_{0})=(\mathrm{int}T)y$ for all $y \in f(C_{0})$, in
particular to $y=f(x)$. Then $f((\mathrm{int}S)x) \subset
(\mathrm{int}T)f(x)$, for all $x \in C_{0}$. It easy to prove that  $(\mathrm{int}%
T)f(x) \subset f((\mathrm{int}S)x)$. Now take $x \in
C=\mathrm{fe}C_{0}$, then exists a sequence $x_{n} \in C_{0}$ such
that $x_{n}$ converge to $x$. Moreover, we have
$f((\mathrm{int}S)x_{n}) = (\mathrm{int}T)f(x_{n}) \mbox{ for all }
n. $ As $f$ is homeomorphism it follows that $f(x_{n})$  converge to
$f(x).$

It follows that for all $g \in \mathrm{int}S$, exists $h \in
\mathrm{int}T$ such that $f(x_{n})=hf(x_{n})$. Hence, we have that
 for all $g \in \mathrm{int}S$ exists $h \in \mathrm{int}T$
 such that $f(gx)=hf(x)$.

Analogously,  taking $x \in C$ then
 for all  $h \in \mathrm{int}T$ exists $g \in \mathrm{int}S $
  such that  $ hf(x)=f(gx)$. Hence $f((\mathrm{int}S)x)=(\mathrm{int}%
T)f(x)$, for all $x \in C$.
\end{proof}

It is not difficult to prove this kind of converse:

\begin{proposition}
Consider the notations and assumptions as above. Suppose that $(M_{1},%
\mathrm{int}S)$ and $(M_{2},\mathrm{int}T)$ are orbit equivalent. Then $%
f(C^{0})=(f(C))^{0}$.
\end{proposition}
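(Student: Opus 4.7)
The plan is to directly unwind the definition of the transitivity set $C^{0}$ and apply the orbit equivalence hypothesis. The key fact I will use throughout is the property recalled just before this proposition: for any control set $D$ of an accessible semigroup and any point $y$ in its transitivity set, one has $D^{0} = (\mathrm{int}\,S)y$ (respectively $(\mathrm{int}\,T)y$).

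First, I would pick an arbitrary $x\in C^{0}$. By the recalled identity, $C^{0}=(\mathrm{int}\,S)x$. Applying $f$ and using the hypothesis that $f$ is an orbit equivalence map for $(M_{1},\mathrm{int}\,S)$ and $(M_{2},\mathrm{int}\,T)$, I get
\[
f(C^{0})=f\bigl((\mathrm{int}\,S)x\bigr)=(\mathrm{int}\,T)f(x).
\]
In particular $f(x)\in(\mathrm{int}\,T)f(x)=f(C^{0})\subset f(C)$, so by definition $f(x)\in (f(C))^{0}$.

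Next, I would use the same identity in $M_{2}$: since $f(x)\in(f(C))^{0}$ and $f(C)$ is (by the carried-over hypotheses from the previous proposition) the $T$-invariant control set in $M_{2}$, we have $(f(C))^{0}=(\mathrm{int}\,T)f(x)$. Combining with the display above yields $f(C^{0})=(f(C))^{0}$, which is the desired equality.

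The only subtle point — really the single thing to be careful about — is that $(f(C))^{0}$ is well-defined, i.e.\ that $f(C)$ is genuinely a $T$-control set. This is part of the "notations and assumptions as above" inherited from the preceding proposition (where it was assumed/established that $f$ sends the $S$-invariant control set to the $T$-invariant control set). Once that is granted, there is no real obstacle; the proof is essentially a one-line consequence of orbit equivalence together with the fact that the transitivity set coincides with the $\mathrm{int}$-orbit of any of its points.
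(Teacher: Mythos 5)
Your argument is correct. Note that the paper itself offers no proof of this proposition --- it is introduced only with the remark that ``it is not difficult to prove this kind of converse'' --- so there is nothing to compare against; your write-up simply supplies the omitted argument, and it does so using exactly the tools the paper has already put in place, namely the identity $C^{0}=(\mathrm{int}\,S)x$ for $x\in C^{0}$ together with the orbit-equivalence relation $f((\mathrm{int}\,S)x)=(\mathrm{int}\,T)f(x)$. The two applications of the identity (once in $M_{1}$, once in $M_{2}$ at the point $f(x)$) give both inclusions at once, which is the efficient way to do it. You are also right to isolate the one genuine delicacy: for $(f(C))^{0}$ to be meaningful one must already know that $f(C)$ is a $T$-control set, and this does not follow from Proposition \ref{preservecs} as stated, since the hypothesis here is orbit equivalence of $(M_{1},\mathrm{int}\,S)$ and $(M_{2},\mathrm{int}\,T)$ rather than of $(M_{1},S)$ and $(M_{2},T)$; reading it off from the ``notations and assumptions as above'' (where $f(C)$ is taken to be the $T$-invariant control set, while of course dropping the hypothesis that $f(C^{0})$ is its transitivity set, since that is the conclusion) is the only sensible interpretation, and your proof makes that dependence explicit. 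Two very minor points you could add for completeness: the choice of $x\in C^{0}$ requires $C^{0}\neq\emptyset$, which is guaranteed by accessibility of $S$ for the invariant control set $C$; and the degenerate case $C^{0}=\emptyset$ would anyway give both sides empty.
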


To finish this section, we establish a relation between the concepts
of conjugation and orbit equivalence.

Now suppose that  $S$ and $T$ have the identities $e_{S}$ and
$e_{T}$. Let $\varphi $ be a \emph{cocycle} on $X$ to $T$, that is,
$\varphi :S\times X\rightarrow T$ continuous with
\begin{eqnarray*}
\varphi \left( st,x\right) &=&\varphi \left( s,tx\right) \varphi
\left(
t,x\right) \quad \mbox{for all } s,t\in S, x\in X \mbox{, and} \\
\varphi \left( e_{S},x\right) &=&e_{T}\quad \mbox{for all }x\in X.
\end{eqnarray*}%
The cocycle property is appropriate to define the \emph{skew-product
transformation semigroup} on the product space $X\times Y$ given by
the mapping
\begin{equation}
\Phi :S\times X\times Y\rightarrow X\times Y,\quad \Phi \left(
s,x,y\right) =\left( sx,\varphi \left( s,x\right) y\right) .
\end{equation}%
We might write $s\left( x,y\right) $ instead of $\Phi \left(
s,x,y\right) $.

 We define the following subsemigroup of $T$, called \emph{system semigroup},
\begin{equation}
\mathbf{\mathcal{S}}=\left\{ \varphi \left( s_{n},x_{n}\right)
\varphi \left( s_{n-1},x_{n-1}\right) \cdots \varphi \left(
s_{1},x_{1}\right) :s_{j}\in S,x_{j}\in X,n\in \mathbb{N}\right\} .
\end{equation}%

By considering the action $\sigma $ restricted to the product $\mathbf{\mathcal{S}}%
_{\alpha }\times Y$, we have the transformation semigroup $\left(
\mathbf{\mathcal{S}},Y,\sigma \right) $ associated to the
skew-product transformation semigroup $\left( S,X\times Y,\Phi
\right) $.

To introduce the concepts of topological conjugacy and state
equivalence we consider, for $i=1,2$, the following two skew-product
transformation semigroups

\begin{center} $\Phi^{i} :S\times X^{i}\times Y^{i}\rightarrow
X^{i}\times Y^{i},\quad \Phi^{i} \left( s,x,y\right) =\left(
sx,\varphi^{i} \left( s,x\right) y\right)$ \end{center}

\begin{definition} Let $\xi:Y^{1}\rightarrow
Y^{2}$ and $\iota:X^{1} \rightarrow X^{2}$
 be maps such that  $\xi$ is continuous and satisfy:
\begin{center}$\xi(\varphi^{1}(s,x)y)=\varphi^{2}(s,\iota(x)) \xi(y)$, for all  $(s,x,y)\in S\times X^{1}\times Y^{1}$.\end{center}
In this case, we say that the skew product  $\Phi^{1}$ is
topologically semi conjugate to  $\Phi^{2}$. If $\xi$ is a
homeomorphism and $\iota$ is invertible, then the skew products are
called topologically conjugate.
\end{definition}

 In the particular case where   $\Phi^{1}$ and
 $\Phi^{2}$ are topologically  conjugate, $\iota = id_{X}$ and $\xi$ is a diffeomorphism, we
 say that  $\Phi^{1}$ and  $\Phi^{2}$ are \emph{state equivalent}.
 This terminology is inspired by the concept of state equivalence of
 control systems  (for more details see Agrachev and Sachkov in \cite{as}).

Now we prove a result that relates the concepts of conjugation and
orbit equivalence.
\begin{proposition}\label{conju} If $\Phi^{1}$ and $\Phi^{2}$
are topologically  conjugate then the actions
$(Y^{1},\mathcal{S}^{1})$ and $(Y^{2},\mathcal{S}^{2})$ are orbit
equivalent, where $\mathcal{S}^{1}$ and $\mathcal{S}^{2}$ are the
semigroup system of $\Phi ^{1}$ and $\Phi^{2}$ respectively.
\end{proposition}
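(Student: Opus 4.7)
The plan is to show that the homeomorphism $\xi : Y^{1}\rightarrow Y^{2}$ coming from the topological conjugacy itself serves as the orbit equivalence map between $(Y^{1},\mathcal{S}^{1})$ and $(Y^{2},\mathcal{S}^{2})$. Since $\xi$ is already a homeomorphism by hypothesis, the only nontrivial requirement is the orbit identity $\xi(\mathcal{S}^{1}y)=\mathcal{S}^{2}\xi(y)$ for every $y\in Y^{1}$.

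First I would establish the inclusion $\xi(\mathcal{S}^{1}y)\subseteq \mathcal{S}^{2}\xi(y)$ by induction on the length $n$ of the word defining a generic element $g=\varphi^{1}(s_{n},x_{n})\cdots \varphi^{1}(s_{1},x_{1})\in \mathcal{S}^{1}$. The base case $n=1$ is exactly the conjugacy identity $\xi(\varphi^{1}(s,x)y)=\varphi^{2}(s,\iota(x))\xi(y)$. For the inductive step, I apply the conjugacy identity with the point $y'=\varphi^{1}(s_{n-1},x_{n-1})\cdots \varphi^{1}(s_{1},x_{1})y$ in place of $y$, which gives
\[
\xi(g\cdot y)=\varphi^{2}(s_{n},\iota(x_{n}))\,\xi(y'),
\]
and the induction hypothesis rewrites $\xi(y')$ as $\varphi^{2}(s_{n-1},\iota(x_{n-1}))\cdots \varphi^{2}(s_{1},\iota(x_{1}))\xi(y)$. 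Concatenating, the image $\xi(g\cdot y)$ is exhibited as an element of $\mathcal{S}^{2}$ applied to $\xi(y)$.

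For the reverse inclusion $\mathcal{S}^{2}\xi(y)\subseteq \xi(\mathcal{S}^{1}y)$, I use the invertibility of $\iota$ in an essential way: given an arbitrary word $h=\varphi^{2}(t_{n},z_{n})\cdots \varphi^{2}(t_{1},z_{1})\in \mathcal{S}^{2}$, I set $x_{j}=\iota^{-1}(z_{j})\in X^{1}$ so that $\varphi^{2}(t_{j},z_{j})=\varphi^{2}(t_{j},\iota(x_{j}))$. Applying the conjugacy identity in the reverse direction and again iterating on $n$, I obtain $h\cdot \xi(y)=\xi(\varphi^{1}(t_{n},x_{n})\cdots \varphi^{1}(t_{1},x_{1})y)$, which is clearly an element of $\xi(\mathcal{S}^{1}y)$.

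Neither step presents a serious obstacle; the only point worth flagging is that the reverse inclusion genuinely requires $\iota$ to be a bijection (not merely continuous), because one must exhibit preimages $x_{j}=\iota^{-1}(z_{j})$ in $X^{1}$ to reproduce arbitrary words in $\mathcal{S}^{2}$. Putting the two inclusions together yields $\xi(\mathcal{S}^{1}y)=\mathcal{S}^{2}\xi(y)$, and hence $\xi$ is an orbit equivalence in the sense of the earlier definition.
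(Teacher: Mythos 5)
Your proposal is correct and follows essentially the same strategy as the paper: take the conjugating homeomorphism $\xi$ itself as the orbit equivalence map and verify $\xi(\mathcal{S}^{1}y)=\mathcal{S}^{2}\xi(y)$ by two inclusions, with the invertibility of $\iota$ doing the work in the reverse direction exactly as you flag. The one genuine difference is how a general word in the system semigroup is handled: the paper first collapses $\varphi^{1}(s_{n},x_{n})\cdots\varphi^{1}(s_{1},x_{1})$ into a single factor $\varphi^{1}(s_{n}\cdots s_{1},x)$ via the cocycle property and then applies the conjugacy identity once, whereas you induct on the word length and apply the identity factor by factor. Your version is actually the more careful one, since the cocycle identity $\varphi(st,x)=\varphi(s,tx)\varphi(t,x)$ only collapses the product when the base points are compatible (i.e.\ $x_{j+1}=s_{j}\cdots s_{1}x_{1}$), while the elements of $\mathcal{S}^{i}$ as defined allow arbitrary $x_{j}$; your induction covers all such words without needing that compatibility. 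So the two arguments buy the same conclusion, but yours closes a small gap that the paper's shortcut leaves open.
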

\begin{proof}
 By hypothesis, there exists a homeomorphism
$\xi:Y^{1}\rightarrow Y^{2}$
 and an invertible map $\iota:X^{1}\rightarrow X^{2}$
such that $\xi(\varphi^{1}(s,x)y)=\varphi^{2}(s,\iota(x))\xi(y)$,
  for all   $(s,x,y)\in S\times X^{1}\times Y^{1}.$

Consider the following semigroups associated to $\Phi^{i}$ for
$i=1,2$
\begin{eqnarray}\mathcal{S}^{i}=\{\varphi^{i}(s_{n},x_{n}) \cdots \varphi^{i}(s_{1},x_{1});s_{j}\in S, x_{j}\in X^{i},n\in
\mathbb{N}\}.\nonumber\end{eqnarray}

Define the homeomorphism  $h$ as $\xi$. Then given  $a\in
h(\mathcal{S}^{1}y)$, we have $a=h(b)$, where $b\in
\mathcal{S}^{1}y$, i.e.,$b=\varphi^{1}(s_{n},x_{n}) \cdots
\varphi^{1}(s_{1},x_{1})y=\varphi^{1}(s_{n} \cdots s_{1},x)y.$ Hence
$a\in \mathcal{S}^{2}h(y)$, in fact, $a=\xi(\varphi^{1}(s_{n} \cdots
s_{1},x)y)=\varphi^{2}(s_{n} \cdots s_{1},\iota(x))h(y).$

For the opposite inclusion, consider $a\in \mathcal{S}^{2}h(y)$,
then $a=bh(y)$, with $b\in \mathcal{S}^{2}$, hence
$b=\varphi^{2}(s_{m},v_{m}) \cdots
\varphi^{2}(s_{1},v_{1})=\varphi^{2}(s_{m} \cdots s_{1},v). $ Then,
using a similar idea as above we prove that $a\in
h(\mathcal{S}^{1}y)$.
\end{proof}

\section{Conjugacy and state equivalence of control systems}

In this section we prove that if two systems are diffeomorphic then
they are state equivalent

 Take $M$ a differentiable and connected
$d$-dimensional manifold. Consider in $M$ the following control
system
\[
(\Sigma )\ \ \ \ \ \ \ \ \ \ \dot{x}(t)=X_{0}(x(t))+\sum_{j=1}^m
u_{j}X_{j}(x(t)) ,
\]
where $u: {\Bbb R} \rightarrow U$ is a piecewise constant map with
$U \subset {\Bbb R}^{n}$  compact and convex, and $X_{i}$ are
differentiable vector fields on $M$. Denote by $\mathcal{U}$ the set
of the maps $u$. It is well known that $\mathcal{U}$ is a metric
space (see e.g. Colonius and Kliemann in \cite{ColK00}). We assume
that for each $u$ and $x \in M$ this system has a unique solution
$\phi (t,u,x), t \in {\Bbb R}$, with $\phi (0,u,x)=x$.

As defined in \cite{ColK00}, take
\[ \Phi : {\Bbb R} \times \mathcal{U} \times M \rightarrow \mathcal{U} \times
M,   \Phi (t,u,x)=(\Theta _{t}(u), \phi (t,u,x)), \]
 the control flow of
this system, we know that it is a special case of skew-product
transformation semigroup (see   \cite{josiney}).

Then as a consequence of the previous theorem we consider two
control systems $\Sigma_{1}$ and $\Sigma_{2}$ as above, take their
control flows $\Phi _{1}$ and $\Phi _{2}$  and their correspondent
system semigroups $S_{\Sigma_{1}}$ and $S_{\Sigma_{2}}$. Now we
recall the construction of these semigroups, take the map $\varphi
_{t_{1}}^{u_{1}}:M_{1} \rightarrow M_{2}$ given by $\varphi
_{t_{1}}^{u_{1}} (x)=\varphi (t_{1},u_{1},x)$ then we have that
$\Sigma_{1}$ is a semigroup of diffeormophisms of $M_{1}$ given by
\begin{eqnarray}S_{\Sigma _{1}}=\{\varphi _{t_{r}}^{u_{r}}\circ \cdots \circ
\varphi _{t_{1}}^{u_{1}};u_{i}\in \mathcal{U},t_{i}\geq 0,r\in
\mathbb{N}\} . \nonumber\end{eqnarray}

 The  natural action of $S_{\Sigma_{1}}$ on  $M_{1}$ is defined as
$\varphi \cdot x=\varphi (x)$. In the same way we have  the
semigroup $S_{\Sigma _{2}}$. Recall that $(\Sigma _{1})$ and
$(\Sigma _{2})$ are called \textit{topologically conjugate} if there
exist a homeomorphism $\xi :M_{1}\rightarrow M_{2}$ and an
invertible map $\iota:\mathcal{U}\rightarrow \mathcal{V}$ such that
 $\xi (\varphi (t,u,x))=\psi (t,\iota (u),\xi (x))$, for all
  $(t,u,x)\in \mathbb{R}\times \mathcal{U}\times M_{1}$. Then as
  a consequence of Proposition \ref{conju} we   have the
  following proposition:

\begin{proposition}\label{general}
Suppose that $\Sigma_{1}$ and $\Sigma_{2}$ are topologically
conjugate then the actions $(M_{1},S_{\Sigma_{1}})$ and
$(M_{2},S_{\Sigma_{2}})$ are orbit equivalent.
\end{proposition}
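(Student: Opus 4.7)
The plan is to realize each control flow as a skew-product transformation semigroup in the sense of Section~2 and then quote Proposition~\ref{conju}. For $i=1,2$ I would take the acting semigroup $S=\mathbb{R}_{\geq 0}$, the base $X^i$ to be the admissible control space of $\Sigma_i$ (namely $\mathcal{U}$ and $\mathcal{V}$) endowed with the shift flow $\Theta_t$, the fibre $Y^i=M_i$, and the cocycle $\varphi^i(t,u)\colon M_i\to M_i$ to be the time-$t$ solution map of $\Sigma_i$ under the control $u$. The cocycle identity $\varphi^i(t+s,u)=\varphi^i(t,\Theta_s u)\,\varphi^i(s,u)$ follows from the concatenation property of admissible controls together with uniqueness of solutions, and $\varphi^i(0,u)=\mathrm{id}$ is immediate. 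With these choices the control flow of $\Sigma_i$ takes the skew-product form $\Phi^i(t,u,x)=(\Theta_t u,\varphi^i(t,u)x)$, and the system semigroup $\mathcal{S}^i$ generated by it coincides with $S_{\Sigma_i}$ by construction, since every finite composition $\varphi^i_{t_r}{}^{u_r}\circ\cdots\circ\varphi^i_{t_1}{}^{u_1}$ is exactly an element of $\mathcal{S}^i$.

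Next I would rewrite the hypothesis of topological conjugacy in the cocycle language. The homeomorphism $\xi\colon M_1\to M_2$ and the invertible map $\iota\colon\mathcal{U}\to\mathcal{V}$ satisfy, for all admissible $(t,u,x)$,
\[
\xi(\varphi^1(t,u)\,x) \;=\; \varphi^2(t,\iota(u))\,\xi(x),
\]
which is precisely the intertwining condition in the definition of topologically conjugate skew-products (with the abstract base variable playing the role of the control $u$ and the abstract fibre variable the role of the state $x$). Consequently $\Phi^1$ and $\Phi^2$ are topologically conjugate as skew-product transformation semigroups, and Proposition~\ref{conju} yields that the actions $(Y^1,\mathcal{S}^1)$ and $(Y^2,\mathcal{S}^2)$ are orbit equivalent; this is precisely the assertion that $(M_1,S_{\Sigma_1})$ and $(M_2,S_{\Sigma_2})$ are orbit equivalent.

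The argument is essentially a translation exercise, since Proposition~\ref{conju} does the real work. The only point that requires some care is verifying that the shift $\Theta_t$ on the admissible control space really serves as the base action for the cocycle $\varphi^i$, and that the topological conjugacy of control systems, as stated just before the proposition, matches term-for-term the abstract definition of topologically conjugate skew-products. Once this dictionary is in place, the result follows as a direct corollary of Proposition~\ref{conju}.
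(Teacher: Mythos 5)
Your proposal is correct and follows exactly the route the paper takes: the paper states this proposition as an immediate consequence of Proposition~\ref{conju}, after observing that the control flow is a special case of a skew-product transformation semigroup whose system semigroup is $S_{\Sigma_i}$. You merely spell out the dictionary (base $=$ control space with the shift, cocycle $=$ solution maps, conjugacy of systems $=$ conjugacy of skew-products) that the paper leaves implicit.
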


Another important concept used to classify control systems is the
notion of  state equivalence, as defined in the previous section, in
this case $\xi $ is a diffeomorphism and  $\mathcal{U}=\mathcal{V}$.
This concept is used to classify control systems preserving
differentiable properties. A sufficient condition to guarantee that
$(\Sigma_{1})$ and $(\Sigma_{2})$ be state equivalent is the
existence of a diffeomorphism from $M_{1}$ to $M_{2}$ that preserves
the control systems. Precisely, suppose that
$\mathcal{U}=\mathcal{V}$ and that $\xi :M_{1}\rightarrow M_{2}$ be
a diffeomorphism. For each $u\in \mathcal{U}$ consider the vector
fields $Z_{u}$ in $M_{1}$ and $W_{u}$ in $M_{2}$ given by
\[
Z_{u}(x) =X_{0}(x)+\sum_{j=1}^{m}u_{j}X_{j}(x) \] and \[ W_{u}(\xi
(x)) =Y_{0}(\xi (x))+\sum_{j=1}^{m}u_{j}Y_{j}(\xi (x)) ,
\]
where $x\in M_{1}$. In this conditions we have:
\begin{proposition}\label{scs}
If $\xi :M_{1}\rightarrow M_{2}$ is a diffeomorphism such that
$\xi_{\ast}(Z_{u}(x))_{x}=W_{u}(\xi (x))$, for all $u\in
\mathcal{U}$ \ and $x\in M_{1}$ then the control systems  $(\Sigma
_{1})$ and $(\Sigma _{2})$ are state equivalent.
\end{proposition}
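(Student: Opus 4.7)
The plan is to reduce state equivalence of the two control systems to the standard fact that the flow of a push-forward vector field is the conjugate of the original flow, and then extend this pointwise identity to the skew-product conjugacy required by the definition.

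First I would unwind the definitions. According to the previous section, state equivalence amounts to exhibiting a diffeomorphism $\xi:M_1\to M_2$ and an invertible map $\iota$ with $\iota = \mathrm{id}_\mathcal{U}$ such that $\xi(\phi(t,u,x)) = \psi(t,u,\xi(x))$ for every $(t,u,x)\in\mathbb{R}\times\mathcal{U}\times M_1$, where $\phi$ and $\psi$ are the solutions of $(\Sigma_1)$ and $(\Sigma_2)$ respectively. Since $\xi$ is already given as a diffeomorphism and $\iota=\mathrm{id}_\mathcal{U}$ is the natural choice, the whole content of the proposition is the trajectory identity above.

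Next I would establish the identity for piecewise constant controls by first handling the case of a constant control $u\in U$. On any interval where $u$ is constant the curve $t\mapsto \phi(t,u,x)$ is the integral curve of $Z_u$ starting at $x$. Differentiating the curve $\gamma(t):=\xi(\phi(t,u,x))$ and using the chain rule together with the hypothesis $\xi_{\ast}Z_u = W_u\circ \xi$ gives
\[
\dot{\gamma}(t) \;=\; d\xi_{\phi(t,u,x)}\bigl(Z_u(\phi(t,u,x))\bigr) \;=\; W_u\bigl(\xi(\phi(t,u,x))\bigr) \;=\; W_u(\gamma(t)),
\]
with $\gamma(0)=\xi(x)$. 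By uniqueness of solutions for the ODE associated with $W_u$, one concludes $\gamma(t)=\psi(t,u,\xi(x))$. For a general piecewise constant $u\in\mathcal{U}$ with switching times $0=t_0<t_1<\cdots<t_r$, I would proceed by induction on $r$: apply the constant-control case on each subinterval $[t_{k-1},t_k]$, using as initial point the terminal state produced at time $t_{k-1}$, and concatenate. Since $\xi$ maps the concatenation endpoints consistently, the identity $\xi(\phi(t,u,x))=\psi(t,u,\xi(x))$ propagates to all $t\geq 0$, and an identical argument applied to $-u$ (or to the reverse-time flow) covers $t<0$.

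Finally I would read off the state equivalence: taking $\iota=\mathrm{id}_\mathcal{U}$ and the diffeomorphism $\xi$, the displayed identity is precisely the conjugacy $\xi\circ\phi(t,u,\cdot)=\psi(t,\iota(u),\cdot)\circ\xi$ required by the definition, so $(\Sigma_1)$ and $(\Sigma_2)$ are state equivalent. The only delicate point in the argument is the pushforward computation in the second step and the clean concatenation across switching times; everything else is bookkeeping. I do not anticipate a genuine obstacle, because the hypothesis $\xi_{\ast}Z_u=W_u\circ\xi$ is tailor-made to transfer the ODE via uniqueness.
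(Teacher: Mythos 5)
Your proposal is correct and follows essentially the same route as the paper: differentiate $t\mapsto\xi(\varphi(t,u,x))$ via the chain rule, use the hypothesis $\xi_{\ast}Z_{u}=W_{u}\circ\xi$ to show this curve solves the $(\Sigma_{2})$ equation with initial value $\xi(x)$, and conclude by uniqueness of solutions. Your explicit induction over the switching times of a piecewise constant control is a more careful rendering of a step the paper compresses into a single differentiation, but it is the same argument.
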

\begin{proof}
Given $u\in \mathcal{U}$ and $x\in M_{1}$ denote by $\varphi
(t,u,x)$ the unique solution of the system  $(\Sigma _{1})$ such
that $\varphi (0,u,x)=x$ and by  $\psi (t,u,\xi (x))$ the unique
solution of $(\Sigma _{2})$ such that  $\psi (0,u,\xi(x))=\xi(x)$.
Them $\frac{d}{dt}\varphi (t,u,x)=Z_{u}(\varphi(t,u,x))$, for all
$t\in \mathbb{R}$ and hence $\frac{d}{dt}\xi (\varphi
(t,u,x))=(\xi_{\ast})_{\varphi (t,u,x)}\frac{d}{dt}\varphi
(t,u,x)=\xi_{\ast}(Z_{u}(\varphi (t,u,x))_{\varphi
(t,u,x)}=W_{u}(\xi(\varphi (t,u,x)))$, showing that $\xi (\varphi
(t,u,x))$ is also the solution of the differential equation
$\dot{y}(t)=Y_{0}(y(t))+\sum_{j=1}^{m}u_{j}Y_{j}(y(t))$ in $M_{2}$,
with initial value $\xi(\varphi (0,u,x))=\xi(x)$. Therefore
$\xi(\varphi (t,u,x))=\psi (t,u,\xi(x))$, for all $(t,u,x)\in
\mathbb{R}\times \mathcal{U}\times M_{1}$.
\end{proof}

Another well known concept (see e.g. \cite{jouan1}) is the
diffeomorphic control systems.
\begin{definition}
Using the above notations, the control systems  $(\Sigma _{1})$ and
$(\Sigma _{2})$ are diffeomorphic  if there exists a diffeomorphism
$\xi :M_{1}\rightarrow M_{2}$ such that $\xi_{\ast}(X_{i})=Y_{i}$
for $0\leq i\leq m $
\end{definition}

Then  we have the following result that relates state equivalent
control systems with diffeomorphic control systems
\begin{proposition}\label{state}
If the control systems $(\Sigma _{1})$ and $(\Sigma _{2})$ are
diffeomorphic then they are state equivalent.
\end{proposition}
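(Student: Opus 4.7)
The plan is to obtain Proposition \ref{state} as a direct corollary of Proposition \ref{scs}. Since the hypothesis gives a diffeomorphism $\xi:M_{1}\rightarrow M_{2}$ with $\xi_{\ast}(X_{i})=Y_{i}$ for $0\leq i\leq m$, and since the two control systems share the same control range (the set $\mathcal{U}$ appears in both the definition of $(\Sigma_{1})$ and $(\Sigma_{2})$ in the statement), the only thing to verify is the relation $\xi_{\ast}(Z_{u}(x))=W_{u}(\xi(x))$ appearing in the hypothesis of Proposition \ref{scs}.

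First, I would recall the definitions of $Z_{u}$ and $W_{u}$ from the paragraph preceding Proposition \ref{scs}: for each fixed $u\in\mathcal{U}$, the vector field $Z_{u}$ on $M_{1}$ is the affine combination $X_{0}+\sum_{j=1}^{m}u_{j}X_{j}$, and analogously for $W_{u}$ on $M_{2}$. Then I would use the linearity of the pushforward $\xi_{\ast}$ at a point $x\in M_{1}$ to write
\[
\xi_{\ast}\bigl(Z_{u}(x)\bigr)=\xi_{\ast}(X_{0})(\xi(x))+\sum_{j=1}^{m}u_{j}\,\xi_{\ast}(X_{j})(\xi(x)).
\]
Applying the hypothesis $\xi_{\ast}(X_{i})=Y_{i}$ in each summand, the right hand side collapses to $Y_{0}(\xi(x))+\sum_{j=1}^{m}u_{j}Y_{j}(\xi(x))=W_{u}(\xi(x))$, which is exactly the identity required by Proposition \ref{scs}.

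Once this pointwise identity is established uniformly in $u$ and $x$, Proposition \ref{scs} delivers the state equivalence of $(\Sigma_{1})$ and $(\Sigma_{2})$ immediately. There is no real obstacle here: the only subtle point is checking that being diffeomorphic in the sense of the definition (equality of pushforwards of the individual vector fields) does imply the parametric equality $\xi_{\ast}(Z_{u})=W_{u}\circ\xi$ for every admissible control $u$, and this is just the $\mathbb{R}$-linearity of $\xi_{\ast}$ applied fiberwise. The proof is therefore essentially a one-line reduction to Proposition \ref{scs}.
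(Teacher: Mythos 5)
Your proposal is correct and follows exactly the paper's own route: the paper likewise reduces to Proposition \ref{scs} by noting that $\xi_{\ast}(Z_{u}(x))_{x}=W_{u}(\xi(x))$ (which it dismisses as ``easy to see''), and your fiberwise-linearity computation is just the spelled-out version of that step. Nothing further is needed.
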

\begin{proof}
Let $\xi :M_{1}\rightarrow M_{2}$ be a diffeomorphism such that
$\xi_{\ast}(X_{i})=Y_{i}$ for $0\leq i\leq m$. It is easy to see
that $\xi_{\ast}(Z_{u}(x))_{x}=W_{u}(\xi (x))$. Then, by Proposition
\ref{scs}, the control systems $(\Sigma _{1})$ and $(\Sigma _{2})$
are state equivalences.
\end{proof}

\section{Orbit equivalence of semigroup system on homogeneous space}

Take a control system $\Sigma$ on a manifold $M$. The purpose of
this section is to prove that $(M,S_{\Sigma})$ is orbit equivalent
to a semigroup action on a homogeneous space. The Lie-Palais theorem
is fundamental to obtain this result.

We begin  supposing that there exists a Lie group $G$ acting
transitively on $M$. In this case, $M$ is diffeomorphic to a
homogeneous space of $G$. From this   we prove that there exist a
control system $\tilde{\Sigma}$ on a homogeneous space of $G$ such
that $\Sigma$ be orbit equivalent to $\tilde{\Sigma}$. Then take the
control system
\begin{center}
$(\Sigma )\ \ \ \ \ \ \ \ \ \ \dot{x}(t)=X_{0}(x(t))+
\sum_{i=1}^{m}u_{i}X_{i}(x(t))$
\end{center}
on   $M$ with the same hypothesis of the previous section. Consider
the Lie algebra $\mathcal{L}(TM)$ of all vector fields on $M$ and
take its Lie algebra  $\mathcal{{L}}(\Gamma )$, generated by the set
of vector field   $\Gamma =\{X_{0},X_{1},\ldots ,X_{m}\}$. Supposing
that $\mathcal{{L}}(\Gamma )$ has finite dimension we take the
connected and simply connected Lie group  $G$ with Lie algebra
$\mathcal{{L}}(\Gamma )$. A natural way to define the action of $G$
on $M$ is given in the following way. Denote by $\Psi _{t}^{X}$ the
flow of  $X\in \mathcal{L}(\Gamma )$. As every $g\in G$ can be
written as $g=e^{t_{1}X_{i_{1}}}\cdots e^{t_{s}X_{i_{s}}}$, for some
$ t_{i_{1}},\ldots , t_{i_{s}}\in \mathbb{R}$ and $X_{i_{1}},\ldots
, X_{i_{s}}\in \mathcal{L}(\Gamma )$, we can try to define an action
$\phi:G\times M\rightarrow M$ by $\phi (g,x)=\Psi
_{t_{i_{1}}}^{X_{i_{1}}}\circ \cdots \circ \Psi
_{t_{i_{s}}}^{X_{i_{s}}}(x)$. The problem is that there is not just
one way to write  $g\in G$ as product of exponentials. But using
Lie-Palais theorem, we can guarantee that this definition does not
depend on this fact. Before, we define the concept of infinitesimal
action.

\begin{definition}
Let $\mathfrak{g}$ be a Lie algebra and take  $M$ a differentiable
manifold. An infinitesimal action of $\mathfrak{g}$ on $M$ is a
homeomorphism  $\theta:\mathfrak{g}\rightarrow \mathcal{L}(TM)$.
\end{definition}

It is easy see that a differentiable action  $\phi :G\times
M\rightarrow M$ induces an infinitesimal action $\theta
:\mathfrak{g} \rightarrow \mathcal{L}(TM)$, in fact, define $\theta
(X)(x)=d\phi _{x}\mid _{1}(X)$, where $x\in M$ and $1$ denote the
identity element of $G$. One kind of converse is the Lie-Palais
Theorem.

\begin{theorem}
\label{palais}[Lie-Palais] Let $\mathfrak{g}$ be a real and finite
dimensional Lie algebra. Take $G$ the connected and simply connected
Lie group with Lie algebra $\mathfrak{g}$. Consider  $\theta
:\mathfrak{g}\rightarrow \mathcal{L}(TM)$ an infinitesimal action of
 $\mathfrak{g}$ and suppose that the vector fields  $\theta (X),X\in
\mathfrak{g}$ be complete. Then exists a differentiable action
$\phi:G\times M\rightarrow M$ such that $\theta $ is the
correspondent infinitesimal action.
\end{theorem}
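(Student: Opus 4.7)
The plan is to prove the theorem by constructing a foliation of $G \times M$ whose leaves will be the graphs of the maps $x \mapsto \phi(g,x)$, exactly as in Palais's original argument.

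First, for each $X \in \mathfrak{g}$ I would introduce the right-invariant vector field $\bar{X}$ on $G$ with $\bar{X}(e)=X$, and define a vector field $\widehat{X}$ on $G \times M$ by $\widehat{X}(g,x) = (\bar{X}(g), \theta(X)(x))$. Because $\theta$ and $X \mapsto \bar{X}$ are Lie algebra homomorphisms, the assignment $X \mapsto \widehat{X}$ is one as well. At every point the image spans a smooth distribution $\mathcal{D}$ of constant rank $\dim\mathfrak{g}$ (the $G$-component already hits the full tangent space of $G$), and $\mathcal{D}$ is involutive, so Frobenius yields a regular foliation of $G \times M$.

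Next, fixing $x \in M$, let $L_x$ be the maximal integral leaf through $(e,x)$ and consider the restriction $\pi_1 \colon L_x \to G$ of the first projection. Because $\mathcal{D}$ projects isomorphically onto $T_gG$ at each point, $\pi_1$ is a local diffeomorphism. Upgrading it to a covering is the crucial step and the place where completeness enters: for any $X \in \mathfrak{g}$ the integral curve of $\widehat{X}$ through $(g,y) \in L_x$ is $t \mapsto (\exp(tX)g,\ \Psi^{\theta(X)}_t(y))$, which is defined for all $t$ by hypothesis and remains inside $L_x$. Since any smooth path in $G$ can be realized by concatenating pieces of one-parameter subgroups, this allows me to lift arbitrary continuous paths in $G$ to $L_x$ starting at prescribed points, giving $\pi_1$ the path-lifting property and hence making it a covering map. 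Simple-connectedness of $G$ then forces $\pi_1 \colon L_x \to G$ to be a diffeomorphism.

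I would then define $\phi(g,x)$ as the second coordinate of $\pi_1^{-1}(g) \in L_x$. Smoothness of $\phi$ follows from the smooth dependence of leaves on initial data. The group law $\phi(gh,x) = \phi(g, \phi(h,x))$ is obtained by observing that both sides give points of $L_x$ above $gh$ and applying uniqueness; here one uses the right-invariance of $\bar{X}$, which implies that right translation by $h$ in the $G$-factor carries $L_{\phi(h,x)}$ to $L_x$. The identity $\theta(X)(x) = \frac{d}{dt}\bigl|_{t=0}\phi(\exp(tX),x)$ is then immediate, since $t \mapsto (\exp(tX), \phi(\exp(tX),x))$ is by construction the integral curve of $\widehat{X}$ through $(e,x)$. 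The main obstacle is the covering-map step: a surjective local diffeomorphism is not in general a covering, so converting pointwise completeness of every $\theta(X)$ into a genuine global path-lifting property for $\pi_1$ requires the careful argument sketched above; once that is in place the remaining verifications are routine.
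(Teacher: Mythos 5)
The paper does not actually prove this theorem: it is stated as a known result and the proof is explicitly deferred to San Martin \cite{SM2}, so there is no internal argument to compare yours against. What you propose is the standard Palais integration argument (lift $\mathfrak{g}$ to the graph fields $\widehat{X}=(\bar{X},\theta(X))$ on $G\times M$, integrate the resulting involutive rank-$\dim\mathfrak{g}$ distribution, show the leaf $L_x$ through $(e,x)$ covers $G$, and invoke simple connectedness), and in outline it is correct; this is essentially the proof found in the cited reference. Two points need tightening. First, the covering-map step: an arbitrary path in $G$ is \emph{not} a concatenation of translated one-parameter subgroup arcs, so ``lifting paths by concatenating exponentials'' does not lift a given path and does not by itself yield the path-lifting property. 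The clean argument exhibits evenly covered neighborhoods directly: for $V=\exp(W)g$ with $W$ a star-shaped neighborhood of $0$ in $\mathfrak{g}$, each point $(g,y)$ of the fibre of $L_x$ over $g$ determines the section $\exp(Z)g\mapsto$ (time-one flow of $\widehat{Z}$ applied to $(g,y)$), which is defined for all $Z$ precisely because the $\theta(X)$ are complete; uniqueness of integral curves shows these sections have disjoint images whose union is all of the preimage of $V$ in $L_x$. (Concatenations of exponential arcs do suffice to prove surjectivity of the projection $L_x\to G$, which you also need.) Second, a convention check: with $\bar{X}$ right invariant, $X\mapsto\bar{X}$ is an anti-homomorphism of $\mathfrak{g}$ viewed as left-invariant fields, so for $[\widehat{X},\widehat{Y}]$ to be of the form $\widehat{Z}$ --- which involutivity of the span plus the final identification of $\theta$ as the induced infinitesimal action requires --- the signs in the definitions of $\theta$ and $\widehat{X}$ must be matched consistently; the paper's own definition of infinitesimal action (a ``homeomorphism'' $\theta$, presumably a homomorphism) is too loose to settle this, but it is bookkeeping rather than a genuine obstruction. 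With those repairs, your construction of $\phi$, the group law via right translation of leaves, and the recovery of $\theta$ from $\phi$ are all sound.
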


The proof of Lie-Palais theorem can be found in San Martin
\cite{SM2}.

\begin{proposition}\label{prop1}
Let $\Gamma =\{X_{0},X_{1},\l dots ,X_{m}\}$ be a family of complete
and differentiable vector fields on the  manifold  $M$ such that the
Lie algebra $\mathcal{L}(\Gamma)$ has finite dimension. Denote by
$G$ the connected and simply connected Lie group whose Lie algebra
is $\mathcal{L}(\Gamma)$. Then we can define the following action
  $\phi:G\times M\rightarrow M$. Take $g\in G$ hence
$g=e^{t_{1}X_{i_{1}}}\cdots e^{t_{s}X_{i_{s}}}$, for some
$t_{i_{1}},\ldots ,t_{i_{s}}\in \mathbb{R}$ and $X_{i_{1}},\ldots
,X_{i_{s}}\in \Gamma $. Therefore $ \phi (g,x)=\Psi
_{t_{i_{1}}}^{X_{i_{1}}}\circ \cdots \circ \Psi
_{t_{i_{s}}}^{X_{i_{s}}}(x)$.
\end{proposition}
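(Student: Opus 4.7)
The plan is to reduce the proposition to the Lie-Palais Theorem \ref{palais}, so that well-definedness of $\phi(g,x)$ with respect to the chosen factorization $g=e^{t_{1}X_{i_{1}}}\cdots e^{t_{s}X_{i_{s}}}$ becomes automatic. Consider the inclusion
\[
\theta:\mathcal{L}(\Gamma)\hookrightarrow \mathcal{L}(TM),\quad \theta(X)=X,
\]
which is trivially a Lie algebra homomorphism, hence an infinitesimal action of $\mathcal{L}(\Gamma)$ on $M$ in the sense of the definition preceding Theorem \ref{palais}. Once I verify the completeness hypothesis of Lie-Palais, the theorem produces a smooth action $\phi:G\times M\rightarrow M$ whose induced infinitesimal action coincides with $\theta$.

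The first technical step is to check that every vector field in $\mathcal{L}(\Gamma)$ is complete, whereas the hypothesis supplies completeness only for the generators $X_{0},X_{1},\ldots ,X_{m}$. Here I would invoke the classical propagation fact for finite-dimensional Lie algebras of vector fields: if $\mathfrak{h}\subset \mathcal{L}(TM)$ is finite dimensional and admits a generating set of complete vector fields, then every $X\in \mathfrak{h}$ is complete. This is standard and is proved along with the Lie-Palais theorem in San Martin \cite{SM2}; alternatively, one may directly cite the formulation of Lie-Palais that only requires completeness on a generating set. Either way, the hypotheses of Theorem \ref{palais} apply to $\theta$.

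Next, by the very correspondence between Lie group actions and their infinitesimal actions, the one-parameter subgroup $\{e^{tX}:t\in \mathbb{R}\}$ of $G$ acts on $M$ through the flow of $\theta(X)=X$, that is,
\[
\phi(e^{tX},x)=\Psi_{t}^{X}(x)\quad \mbox{for all }X\in \mathcal{L}(\Gamma),\ t\in \mathbb{R},\ x\in M.
\]
Since $G$ is connected and $\mathcal{L}(\Gamma)$ is its Lie algebra, every $g\in G$ can be written in the form $g=e^{t_{1}X_{i_{1}}}\cdots e^{t_{s}X_{i_{s}}}$ with $X_{i_{j}}\in \Gamma$ (indeed, $\Gamma$ Lie-generates $\mathcal{L}(\Gamma)$, hence $\exp\mathcal{L}(\Gamma)$ generates $G$). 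Using the action property of $\phi$ inductively,
\[
\phi(g,x)=\phi(e^{t_{1}X_{i_{1}}},\phi(e^{t_{2}X_{i_{2}}},\ldots ,\phi(e^{t_{s}X_{i_{s}}},x)\ldots))=\Psi_{t_{1}}^{X_{i_{1}}}\circ \cdots \circ \Psi_{t_{s}}^{X_{i_{s}}}(x),
\]
which is exactly the formula asserted in the proposition. Well-definedness with respect to different factorizations of $g$ into exponentials is immediate, because the left-hand side depends only on $g$ and not on the chosen word representing it.

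The main obstacle is the completeness-propagation step, since the statement of Theorem \ref{palais} as recorded here insists on completeness of every $\theta(X)$. If one prefers to avoid invoking the propagation lemma, the alternative route is to check directly that the candidate map $(g,x)\mapsto \Psi_{t_{1}}^{X_{i_{1}}}\circ \cdots \circ \Psi_{t_{s}}^{X_{i_{s}}}(x)$ is independent of the factorization by first defining it on a neighborhood of the identity (using Baker-Campbell-Hausdorff on $\mathcal{L}(\Gamma)$) and then extending by the group law, but this reproduces essentially the content of Lie-Palais and is more delicate; I would therefore stick with the direct appeal to Theorem \ref{palais} combined with the propagation argument.
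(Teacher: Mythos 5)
Your proposal is correct and follows essentially the same route as the paper: take the inclusion $\theta:\mathcal{L}(\Gamma)\hookrightarrow\mathcal{L}(TM)$ as the infinitesimal action, integrate it via the Lie--Palais theorem to an action $\phi$ of $G$, identify $\phi(e^{tX},x)$ with the flow $\Psi_{t}^{X}(x)$, and conclude by induction on the product of exponentials. Your explicit treatment of the completeness-propagation issue (the hypothesis gives completeness only of the generators, while Theorem \ref{palais} as stated asks it of every $\theta(X)$) is a point the paper silently glosses over, and your resolution of it is the standard one.
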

\begin{proof}
Note that the inclusion map $\theta:\mathcal{L}(\Gamma)\rightarrow
\mathcal{L}(TM)$ is an infinitesimal action of the Lie algebra $
\mathcal{L}(\Gamma)$ on $M$. By Lie-Palais theorem, there exists a
differential action  $\phi:G\times M\rightarrow M$ such that
$X_{x}=d\phi _{x}\mid _{1}(X),\forall x\in M,\forall X\in
\mathcal{L}(\Gamma)$. From the description of this action, take
$X\in \mathcal{L}(\Gamma)$ and consider the field $ (X,\theta (X))$
on $G\times M$. The trajectory of this field beginning in  $(1,x)$
is $(e^{tX},\Psi _{t}^{X}(x)),$ that is, $$\frac{d}{dt}\mid
_{t=0}(e^{tX},\Psi _{t}^{X}(x))=(X,\theta (X)_{x}).$$ On the other
hand, taking  $\phi_{x}:G\rightarrow M$, applying in $e^{tX}(1)\in
G$ and using Lie-Palais Theorem we have also
\[
\frac{d}{dt}\mid_{t=0}(e^{tX},\phi _{x}(e^{tX}))=(X,\theta (X)_{x}).
\]

Then $(e^{tX},\Psi _{t}^{X}(x))=(e^{tX},\phi _{x}(e^{tX}))$, i.e.,
\begin{eqnarray}\label{8} \Psi _{t}^{X}(x)=\phi _{x}(e^{tX})=\phi
(e^{tX},x),\forall x\in M,\forall X\in \mathcal{L}(\Gamma ),\forall
t\in \mathbb{R}.\end{eqnarray} Hence, if $X,Y\in \mathcal{L}(\Gamma
),x\in M$ e $t,\tau \in \mathbb{R}$, then
\[
\Psi _{\tau }^{Y}(\Psi _{t}^{X}(x))=\phi _{\phi
_{x}(e^{tX})}(e^{\tau Y})=\phi _{e^{\tau Y}e^{tX}}(x).
\]

By induction we have for all   $x\in M$
  $X_{1},\ldots ,X_{n}\in \mathcal{L}(\Gamma)$ and $t_{1},\ldots ,t_{n}\in \mathbb{R}$ that $\Psi
_{t_{1}}^{X_{1}}\circ \cdots \circ\Psi _{t_{n}}^{X_{n}}(x)=\phi
_{e^{t_{1}X_{1\cdots }}e^{t_{n}X_{n}}}(x)$.
\end{proof}

If we suppose that the family  $\Gamma $ is transitive we have:

\begin{theorem}\label{difeo}
Let $\Gamma =\{X_{0},X_{1},\ldots ,X_{m}\}$ be a family of
transitive, complete and differentiable vector fields on the
connected manifold $M$. Suppose that the Lie algebra
$\mathcal{L}(\Gamma)$ has finite dimension and take $G$ its
associated connected and simply connected Lie group. Then, $M$ is
diffeomorphic to a $G$-homogeneous space.
\end{theorem}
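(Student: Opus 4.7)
The plan is to apply Proposition \ref{prop1} to produce a smooth action $\phi : G \times M \to M$, show that this action is transitive, and then identify $M$ with the quotient $G/H$ for the isotropy $H$ at a chosen base point.

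First I would invoke Proposition \ref{prop1} to obtain the differentiable action $\phi : G \times M \to M$ whose associated infinitesimal action is the inclusion $\mathcal{L}(\Gamma) \hookrightarrow \mathcal{L}(TM)$. In particular, the identity $\Psi_{t}^{X}(x) = \phi(e^{tX}, x)$ established in (\ref{8}) gives, for each $x \in M$, the differential $(d\phi_{x})_{1} : \mathcal{L}(\Gamma) \to T_{x}M$ whose image is exactly $\{X(x) : X \in \mathcal{L}(\Gamma)\}$.

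Next I would transfer the transitivity of $\Gamma$ into transitivity of $\phi$. Since $\Gamma$ is transitive, the evaluation $\mathcal{L}(\Gamma) \to T_{x}M$, $X \mapsto X(x)$, is surjective for every $x \in M$ (this follows from the orbit theorem of Sussmann/Nagano applied to the involutive, finite-dimensional distribution generated by $\Gamma$, combined with connectedness of $M$). Consequently, for each $x \in M$ the orbit map $\phi_{x} : G \to M$, $g \mapsto \phi(g,x)$, has surjective differential at the identity, so by translation by the $G$-action it is a submersion on all of $G$. In particular every orbit $\phi(G, x)$ is open in $M$. Since distinct orbits are disjoint and $M$ is connected, there is only one orbit and $\phi$ is transitive.

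Fix $x_{0} \in M$ and set $H = \{g \in G : \phi(g, x_{0}) = x_{0}\}$, which is closed in $G$ because $\phi$ is continuous. Define $\bar{\phi} : G/H \to M$ by $\bar{\phi}(gH) = \phi(g, x_{0})$. Transitivity shows $\bar{\phi}$ is surjective, and the definition of $H$ shows it is injective; it is smooth because $\phi_{x_{0}}$ is smooth and $G \to G/H$ is a submersion. Finally, since $\phi_{x_{0}} : G \to M$ is a submersion and factors as $G \to G/H \xrightarrow{\bar{\phi}} M$ through another submersion, $\bar{\phi}$ has surjective differential everywhere; being an equivariant bijection between manifolds of equal dimension, it is a local diffeomorphism, hence a diffeomorphism.

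The main technical obstacle is the passage from transitivity of the family $\Gamma$ of vector fields (an infinitesimal notion) to transitivity of the group action $\phi$ (a global notion); this is where the orbit theorem, together with the connectedness of $M$ and the completeness hypothesis of the vector fields in $\mathcal{L}(\Gamma)$, is essential. Once this is in hand, the construction of the diffeomorphism $G/H \cong M$ is the standard homogeneous-space argument.
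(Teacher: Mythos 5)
Your proposal is correct and follows essentially the same route as the paper: invoke Proposition \ref{prop1} to get the action $\phi$, deduce transitivity of $\phi$ from transitivity of $\Gamma$, and identify $M$ with $G/H_{x_{0}}$ via the standard homogeneous-space argument. The only difference is one of economy: the paper gets transitivity of $\phi$ immediately from the fact that the maps $\phi(g,\cdot)$ are exactly the compositions of flows of elements of $\Gamma$ (which is what transitivity of $\Gamma$ asserts), whereas you reroute through the orbit theorem and the open-orbit argument --- a valid but longer path that the paper leaves implicit only in the final step ($G/H_{x_{0}}\cong M$).
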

\begin{proof}
Consider the action $\phi :G\times M\rightarrow M$ the action given
in the previous proposition. Then,
$\phi(g,x)=\Psi_{t_{i_{1}}}^{X_{i_{1}}}\circ \cdots \circ
\Psi_{t_{i_{s}}}^{X_{i_{s}}}(x)$ and as  $\Gamma $ is transitive we
have that this action is transitive. Hence, fixing $x_{0}\in M$ and
considering the isotropy subgroup $H_{x_{0}}=\{g\in G:\phi
(g,x_{0})=x_{0}\}$ we that  $M$ is diffeomorphic to the homogeneous
space $G/H_{x_{0}}$.
\end{proof}

 Now we describe this above diffeomorphism.  If $x\in M$, as $\Gamma $
 is transitive, there exist
$X_{i_{1}},\ldots ,X_{i_{s}}\in \Gamma$ and $t_{i_{1}},\ldots
,t_{i_{s}}\in \mathbb{R}$ such that $$x=\Psi
_{t_{i_{1}}}^{X_{i_{1}}}\circ \cdots \circ \Psi
_{t_{i_{s}}}^{X_{i_{s}}}(x_{0})=\phi_{e^{t_{i_{1}}X_{i_{1}}}\cdots
e^{t_{i_{s}}X_{i_{s}}}}(x_{0}).$$ In this case, the above
diffeomorphism, denoted by $\xi :M\longrightarrow G/H_{x_{0}}$, is
defined by $\xi (x)=(e^{t_{i_{1}}X_{i_{1}}}\cdots
e^{t_{i_{s}}X_{i_{s}}})H_{x_{0}}$ and its inverse is given in the
following way. Given $g\in G$, there exist $X_{i_{1}},\ldots
,X_{i_{s}}\in\Gamma $ and $ t_{i_{1}},\ldots ,t_{i_{s}}\in
\mathbb{R}$ such that $ g=e^{t_{i_{1}}X_{i_{1}}}\cdots
e^{t_{i_{s}}X_{i_{s}}}$. Remember that this choices are not unique.
In this case, define
\begin{eqnarray}\xi^{-1}(gH_{x_{0}})=\Psi _{t_{i_{1}}}^{X_{i_{1}}}\circ
\cdots \circ \Psi _{t_{i_{s}}}^{X_{i_{s}}}(x_{0}),\end{eqnarray}
where by Proposition \ref{prop1}, this definition does not depend on
the exponential form of $g$.

To finish this section we  prove a result that relates a control
system on $M$ with his induced system on $G/H_{x_{0}}$. But first we
show an important lemma to the sequence of this paper. Consider the
map   $f$ defined as $\xi^{-1}\circ \pi :G\longrightarrow M $, where
$\pi :G\longrightarrow G/H_{x_{0}}$ is the canonical projection.
With this, $\pi (g)=\xi (f(g)),\forall g\in G$, and as $\xi^{-1}$
and $\pi$ are surjective maps it follows that   $f$ is surjective.

\begin{lemma}\label{lema}
If $X\in \mathcal{L}(\Gamma )$ then $\pi _{\ast }(X)=\xi_{\ast
}(X)$.
\end{lemma}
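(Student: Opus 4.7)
The plan is to exploit the factorization $\pi = \xi\circ f$, where $f := \xi^{-1}\circ \pi : G \to M$ has already been defined in the text. First, I would identify $f$ concretely: writing any $g\in G$ as $g = e^{t_{i_{1}}X_{i_{1}}}\cdots e^{t_{i_{s}}X_{i_{s}}}$ and combining the explicit formula for $\xi^{-1}$ with the description of the action $\phi$ from Proposition~\ref{prop1}, one obtains $f(g) = \phi(g, x_{0})$.

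The heart of the argument is the pointwise intertwining $df_{g}(X(g)) = X(f(g))$ for every $g\in G$, where on the left-hand side $X$ denotes the right-invariant vector field on $G$ extending the vector $X\in T_{e}G$ corresponding to our element of $\mathcal{L}(\Gamma)$, and on the right-hand side $X$ denotes the same element viewed (via the tautological infinitesimal action $\theta$) as a vector field on $M$. To prove this I would use two ingredients already available in the paper: the flow of a right-invariant $X$ on $G$ is $\Psi_{t}^{X}(g) = e^{tX}g$, and identity (\ref{8}) in the proof of Proposition~\ref{prop1} states that $\Psi_{t}^{X}(y) = \phi(e^{tX}, y)$ on $M$. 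Feeding these into the action property $\phi(e^{tX}g, x_{0}) = \phi(e^{tX}, \phi(g, x_{0}))$ and differentiating at $t=0$ then gives the claim in one line.

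With the intertwining in hand, the chain rule applied to $\pi = \xi\circ f$ yields
\[
d\pi_{g}(X(g)) \;=\; d\xi_{f(g)}\bigl(df_{g}(X(g))\bigr) \;=\; d\xi_{f(g)}\bigl(X(f(g))\bigr),
\]
and since $\xi(f(g)) = \pi(g)$, this reads $\pi_{\ast}(X)(\pi(g)) = \xi_{\ast}(X)(\pi(g))$; surjectivity of $\pi$ then finishes the argument. The main obstacle I anticipate is purely notational: keeping the two incarnations of $X$ (right-invariant field on $G$ versus complete field on $M$) rigorously distinct, and verifying that right-invariance is indeed the convention making $\pi_{\ast}(X)$ a well-defined vector field on $G/H_{x_{0}}$. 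This latter point reduces to the observation that $\pi\circ R_{h} = \pi$ for every $h\in H_{x_{0}}$, so that right-invariance gives $d\pi_{gh}(X(gh)) = d\pi_{g}(X(g))$ for such $h$, ensuring the pushforward is unambiguously defined.
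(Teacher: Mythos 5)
Your proposal is correct and follows essentially the same route as the paper: both arguments reduce the lemma to the curve identity $\pi(e^{tX}g)=\xi(\Psi_{t}^{X}(f(g)))$, obtained from the action property of $\phi$ together with identity (\ref{8}) of Proposition \ref{prop1}, and then differentiate at $t=0$. Your explicit factorization through $f$ and the remark on well-definedness of $\pi_{\ast}(X)$ for right-invariant fields are just a cleaner packaging of the same computation.
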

\begin{proof}
Take $X\in \mathcal{L}(\Gamma)$, $g\in G$ and $x\in M$ such that
$f(g)=x$. Consider $e^{tX}g$ the trajectory of $X$ in $G$ with
initial point $g\in G$. Consider  $\Psi_{t}^{X}(x)$ the trajectory
of $X$ in $M$ with initial point $x\in M$. Then,
\begin{eqnarray}\label{6}\frac{d}{dt}|_{t=0}(\xi(\Psi_{t}^{X}(x)))=d\xi|_{x}(X_{x}) \mbox{ and }
\frac{d}{dt}|_{t=0}(\pi(e^{tX}g))=d\pi|_{g}(X_{g}).\end{eqnarray}

Note that there exist  $X_{i_{1}},\ldots ,X_{i_{s}}\in \Gamma$ and
$t_{i_{1}},\ldots ,t_{i_{s}}\in \mathbb{R}$ such that
$g=e^{t_{i_{1}}X_{i_{1}}}\cdots e^{t_{i_{s}}X_{i_{s}}}$. Also there
is  $g_{1}\in G$ such that $x=\phi(g_{1},x_{0})$. Analogously, there
are  $X_{j_{1}},\ldots ,X_{j_{k}}\in \Gamma$ and $t_{j_{1}},\ldots
,t_{j_{k}}\in \mathbb{R}$ such that
$g_{1}=e^{t_{j_{1}}X_{j_{1}}}\cdots e^{t_{j_{k}}X_{j_{k}}}$. Then
$gg_{1}=e^{t_{i_{1}}X_{i_{1}}}\cdots
e^{t_{i_{s}}X_{i_{s}}}e^{t_{j_{1}}X_{j_{1}}}\cdots
e^{t_{j_{k}}X_{j_{k}}}.$ Hence
\begin{eqnarray}\xi(\phi(g,x))&=&\xi(\phi(g,\phi(g_{1},x_{0})))=gg_{1}H_{x_{0}}.\nonumber
\end{eqnarray}

 As $\pi(gg_{1})=gg_{1}H_{x_{0}}$, then $\pi(gg_{1})=\xi(\phi(g,x)).$
In particular, given  $X\in \mathcal{L}(\Gamma)$ and $t\in
 \mathbb{R}$, $\pi(e^{tX}g)=\xi(\phi(e^{tX},x)).$
 By (\ref{8}), we have $\pi(e^{tX}g)=\xi(\Psi_{t}^{X}(x)).$

 Hence, from (\ref{6})  we have
$\pi_{\ast}(X)=\xi_{\ast}(X).$

\end{proof}

Returning to the control system $(\Sigma )$ on $M$ and taking the
vector fields $\widetilde{X}_{i}=\pi _{\ast }(X_{i}),0\leq i\leq m$
on $G/H_{x_{0}}$, we define the following control system on
$G/H_{x_{0}}$:
\begin{center}
$(\widetilde{\Sigma })\ \ \ \ \ \ \ \ \ \
\dot{\widetilde{x}}(t)=\widetilde{X}_{0}(
\widetilde{x}(t))+\sum_{i=1}^{m}u_{i}\widetilde{X}_{i}(\widetilde{x}(t))$.
\end{center}

 Note that by Lemma \ref{lema}, $\xi_{\ast}(X_{i})=\widetilde{X}_{i}$ for $0\leq i\leq m$, and knowing that
  $\xi :M\longrightarrow G/H_{x_{0}}$ is a diffeomorphism,
we have that the control systems $(\Sigma)$ and
$(\widetilde{\Sigma})$ are diffeomorphic. Consequently, by
Proposition \ref{state} it follows that $(\Sigma)$ and
$(\widetilde{\Sigma})$ are state equivalent. Denoting by
$S_{\Sigma}$ and $S_{\widetilde{\Sigma}}$ the associated semigroups,
using the Proposition \ref{general} and recalling that state
equivalent systems are topologically conjugate we conclude the
following theorem

\begin{theorem}\label{teo}
Let $M$ be a connected and differentiable manifold and consider
$(\Sigma)$ the above control system. Suppose that $\Gamma
=\{X_{0},X_{1},\ldots ,X_{m}\}$ is transitive and complete on $M$.
Suppose also that the Lie subalgebra of $\mathcal{L}(TM)$, generated
by  $\Gamma $, has finite dimension. Then, the action
$(M,S_{\Sigma})$ is orbit  equivalent to a semigroup action on a
homogeneous space.
\end{theorem}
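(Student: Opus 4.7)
The strategy is to assemble the machinery developed in the previous propositions and lemmas so that $(\Sigma)$ on $M$ and the induced system $(\widetilde{\Sigma})$ on $G/H_{x_0}$ are matched through the diffeomorphism $\xi$, and then to transfer this matching to the level of semigroup actions via the equivalences already established.

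First, I would invoke Theorem \ref{difeo}: the transitivity and completeness of $\Gamma$, together with finite dimensionality of $\mathcal{L}(\Gamma)$, produce the simply connected Lie group $G$ with $\mathrm{Lie}(G)=\mathcal{L}(\Gamma)$, and the isotropy subgroup $H_{x_0}$ at a chosen base point $x_0\in M$ yields a diffeomorphism $\xi\colon M\to G/H_{x_0}$. I would then define the candidate system $(\widetilde{\Sigma})$ on $G/H_{x_0}$ by pushing forward each vector field via $\pi\colon G\to G/H_{x_0}$, that is, $\widetilde{X}_i=\pi_*(X_i)$, giving
\[
\dot{\widetilde{x}}(t)=\widetilde{X}_0(\widetilde{x}(t))+\sum_{i=1}^m u_i\widetilde{X}_i(\widetilde{x}(t)).
\]

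Next, I would show that $\xi$ exhibits $(\Sigma)$ and $(\widetilde{\Sigma})$ as diffeomorphic control systems in the sense of the definition preceding Proposition \ref{state}. The key point here is the identity $\xi_*(X_i)=\widetilde{X}_i=\pi_*(X_i)$ for $0\leq i\leq m$, which is exactly the content of Lemma \ref{lema}. With this in hand, Proposition \ref{state} immediately gives state equivalence of the two systems. Since state equivalence is, by construction, a special case of topological conjugacy (with $\mathcal{U}=\mathcal{V}$ and $\iota=\mathrm{id}$), Proposition \ref{general} applies and yields that the semigroup actions $(M,S_\Sigma)$ and $(G/H_{x_0},S_{\widetilde{\Sigma}})$ are orbit equivalent. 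Since $G/H_{x_0}$ is a homogeneous space, this proves the theorem.

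I do not expect a genuine obstacle in this argument, since the heavy lifting has already been done: Theorem \ref{difeo} handles the geometric identification $M\cong G/H_{x_0}$ (via Lie–Palais), Lemma \ref{lema} handles the compatibility of $\xi_*$ and $\pi_*$ on the generating vector fields, and Propositions \ref{scs}, \ref{state}, \ref{general} chain the notions diffeomorphic $\Rightarrow$ state equivalent $\Rightarrow$ topologically conjugate $\Rightarrow$ orbit equivalent at the level of system semigroups. The only point worth being careful about is verifying that the pushed-forward fields $\widetilde{X}_i$ are well defined and smooth on $G/H_{x_0}$, which follows because the right-invariant fields realizing $\mathcal{L}(\Gamma)$ on $G$ descend through $\pi$ exactly in the situation produced by the Lie–Palais action, so no $H_{x_0}$-ambiguity arises thanks to Proposition \ref{prop1}.
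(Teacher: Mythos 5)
Your proposal is correct and follows exactly the same route as the paper: the paper's own argument (given in the paragraphs preceding the theorem and summarized in its one-line proof) constructs $\widetilde{X}_i=\pi_*(X_i)$, uses Lemma \ref{lema} to identify $\xi_*(X_i)=\widetilde{X}_i$ so that $(\Sigma)$ and $(\widetilde{\Sigma})$ are diffeomorphic, and then chains Proposition \ref{state} and Proposition \ref{general} to obtain orbit equivalence of $(M,S_{\Sigma})$ and $(G/H_{x_0},S_{\widetilde{\Sigma}})$. No substantive difference from the paper's proof.
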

\begin{proof}
As we see above, the action  $(M,S_{\Sigma })$ is orbit equivalent
to the action  $(G/H_{x_{0}},S_{\widetilde{\Sigma }}).$
\end{proof}

\section{Generalized linear system on manifolds}

Our goal in this section is to introduce the concept of linear
control systems on general manifolds and using the results of the
previous sections show that, under certain conditions, a linear
control system   on a manifold is orbit equivalent to a linear
control  system on a homogeneous space.

Recall that the concept of linear control system depends on the
structure of the Lie group. Then to define this concept on general
manifolds we must work around the lack of the Lie group. Now we
define the generalized linear control system. Let $M$ be a connected
manifold with finite dimension and denote by $\mathcal{L} (TM)$ the
Lie algebra of the differentiable vector fields on  $M$.

\begin{definition}
A generalized linear control system on $M$ is a control system
\begin{center}$ (\Lambda ) \ \ \ \ \ \ \ \ \ \ \ \ \ \
\dot{x}=\mathcal{F} (x)+\sum_{j=1}^{m}u_{j}Y_{j}(x)$\end{center}
where

\begin{enumerate}
\item the set of vector fields $\Gamma=\{Y_{1},\ldots ,Y_{m}\}$ generates the finite
dimensional  Lie subalgebra $\mathcal{L} (\Gamma)$ of
$\mathcal{L}(TM)$ and every vector field $Y_{i}\in \Gamma$ is
complete,

\item $\mathcal{F}\in\mathcal{L} (TM)$,  $[\mathcal{F},X]\in \mathcal{L} (\Gamma),\forall
X\in \mathcal{L} (\Gamma)$ and there exists  $x_{0}\in M$ such that
$\mathcal{F}_{x_{0}}=0$,

\item and $u=(u_{1},\ldots ,u_{m})\in\mathbb{R}^{m}$.
\end{enumerate}
\end{definition}

It is clear that a linear control system on a Lie group is a
generalized linear control system, but not all generalized linear
control system is a linear control system. In fact, in case of
generalized linear control system, the vector fields $Y_{i}$ are not
necessarily invariants.

Now we have our main result
\begin{theorem}
Consider $M$ a connected and simply connected differentiable
manifold. Let $(\Lambda )$ be the above generalized linear control
system on $M$. If $\Gamma=\{Y_{1},\ldots ,Y_{m}\}$ is transitive on
$M$, then the action $(M,S_{\Lambda })$ is orbit equivalent to a
semigroup action associated to a linear control system on a
homogeneous space.
\end{theorem}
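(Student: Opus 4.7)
The plan is to apply Theorem \ref{difeo} to the family $\Gamma$ to reduce $M$ to a homogeneous space, then show that the pushed-forward system on that homogeneous space is precisely a linear control system, and finally conclude via the state equivalence / orbit equivalence machinery developed in Sections 2 and 3.

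First, since $\Gamma=\{Y_{1},\ldots ,Y_{m}\}$ is transitive, complete, and generates the finite dimensional Lie algebra $\mathcal{L}(\Gamma)$, Theorem \ref{difeo} provides a diffeomorphism $\xi:M\rightarrow G/H_{x_{0}}$, where $G$ is the connected and simply connected Lie group with Lie algebra $\mathcal{L}(\Gamma)$ and $H_{x_{0}}$ is the isotropy at $x_{0}$. Lemma \ref{lema} then gives $\xi_{\ast}(Y_{i})=\pi_{\ast}(Y_{i})$ for every $i=1,\dots ,m$, so each pushed-forward controlled vector field is invariant on $G/H_{x_{0}}$, as required for a linear system.

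The main obstacle, and the heart of the argument, is to show that $\xi_{\ast}(\mathcal{F})$ is a linear vector field on $G/H_{x_{0}}$, i.e., $\pi$-related to a linear vector field $\mathcal{X}$ on $G$. The hypothesis $[\mathcal{F},X]\in \mathcal{L}(\Gamma)$ for all $X\in \mathcal{L}(\Gamma)$ says precisely that $D:=\mathrm{ad}_{\mathcal{F}}\big|_{\mathcal{L}(\Gamma)}$ is a derivation of $\mathcal{L}(\Gamma)$. Because $G$ is simply connected, the one-parameter subgroup $e^{tD}\subset \mathrm{Aut}(\mathcal{L}(\Gamma))$ lifts uniquely to a one-parameter subgroup $\Phi_{t}\in \mathrm{Aut}(G)$ with $d\Phi_{t}|_{e}=e^{tD}$. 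The vector field $\mathcal{X}(g):=\frac{d}{dt}\big|_{t=0}\Phi_{t}(g)$ is then linear in the sense of Ayala-San Martin (its flow is a one-parameter group of automorphisms and $\mathcal{X}(e)=0$), and it satisfies $[\mathcal{X},Y]=D(Y)=[\mathcal{F},Y]$ for every $Y\in \mathcal{L}(\Gamma)$.

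The technical key step is then the analogue of Lemma \ref{lema} for the drift: one must verify the intertwining identity
\[
\phi(\Phi_{t}(g),x_{0})=\Psi_{t}^{\mathcal{F}}(\phi(g,x_{0})), \qquad g\in G,\; t\in \mathbb{R}.
\]
Writing $g=e^{t_{1}X_{i_{1}}}\cdots e^{t_{s}X_{i_{s}}}$ and using $\Phi_{t}(e^{sY})=e^{s\,e^{tD}(Y)}$ together with the Lie derivative formula $(\Psi_{-t}^{\mathcal{F}})_{\ast}Y=e^{t\,\mathrm{ad}_{\mathcal{F}}}Y$, both sides of the identity are shown to agree by comparing their derivatives in $t$ and invoking $\mathcal{F}_{x_{0}}=0$. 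The identity has two immediate consequences: first, for $g\in H_{x_{0}}$ it gives $\phi(\Phi_{t}(g),x_{0})=\Psi_{t}^{\mathcal{F}}(x_{0})=x_{0}$, so $\Phi_{t}(H_{x_{0}})\subset H_{x_{0}}$ and $\mathcal{X}$ descends to a well defined vector field $\pi_{\ast}(\mathcal{X})$ on $G/H_{x_{0}}$; second, applying $\xi$ and differentiating at $t=0$ yields $\pi_{\ast}(\mathcal{X})=\xi_{\ast}(\mathcal{F})$.

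With this in hand, the system $(\widetilde{\Lambda})$ on $G/H_{x_{0}}$ defined by the drift $\pi_{\ast}(\mathcal{X})$ and controlled fields $\pi_{\ast}(Y_{i})$ is, by construction, a linear control system on the $G$-homogeneous space $G/H_{x_{0}}$. Since $\xi$ is a diffeomorphism taking the vector fields of $(\Lambda)$ to those of $(\widetilde{\Lambda})$, the two systems are diffeomorphic; Proposition \ref{state} then yields state equivalence, and finally Proposition \ref{general} gives orbit equivalence of $(M,S_{\Lambda})$ and $(G/H_{x_{0}},S_{\widetilde{\Lambda}})$, completing the proof. I expect the intertwining identity above to be the only substantive computation; everything else is assembly of previously established results.
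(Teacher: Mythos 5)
Your proposal is correct and assembles the same outer framework as the paper (Theorem \ref{difeo}, Lemma \ref{lema} for the controlled fields, the derivation $D=\mathrm{ad}_{\mathcal{F}}\mid_{\mathcal{L}(\Gamma)}$ and its associated linear field $\mathcal{X}$ on the simply connected group $G$, then Propositions \ref{state} and \ref{general}), but you handle the central technical step by a genuinely different device. The paper argues in two stages: it first proves $D(\mathfrak{h})\subset\mathfrak{h}$ by evaluating brackets at the coset $H$ (using $\xi_{\ast}(\mathcal{F})_{H}=0$ and $\pi_{\ast}(Y)_{H}=0$ for $Y\in\mathfrak{h}$), deduces $\phi_{t}(\exp Y)=\exp(e^{tD}Y)\in H$, and then needs $H$ connected --- this is exactly where the hypothesis that $M$ is simply connected enters --- to conclude that the whole subgroup $H$ is invariant under the flow of $\mathcal{X}$; afterwards it shows that $\pi_{\ast}(\mathcal{X})-\xi_{\ast}(\mathcal{F})$ commutes with every $\pi_{\ast}(X)$ and vanishes at $H$, hence vanishes identically by transitivity. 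You instead establish the single flow-level identity $\phi(\Phi_{t}(g),x_{0})=\Psi_{t}^{\mathcal{F}}(\phi(g,x_{0}))$ for arbitrary $g\in G$ and read both facts off from it: specializing to $g\in H_{x_{0}}$ gives $\Phi_{t}(H_{x_{0}})\subset H_{x_{0}}$ with no connectedness assumption on $H$, and applying $\xi$ and differentiating at $t=0$ gives $\pi_{\ast}(\mathcal{X})=\xi_{\ast}(\mathcal{F})$. This is closer to Jouan's original argument, and it buys something real: since it nowhere uses that $M$ is simply connected, it would deliver the paper's final, more general theorem directly, without the covering-space detour. Two details should be made explicit. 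First, the identity presupposes that $\Psi_{t}^{\mathcal{F}}$ is globally defined, while completeness of $\mathcal{F}$ is not among the hypotheses; however, your conjugation computation, which rewrites $\Psi_{t}^{\mathcal{F}}\circ\Psi_{t_{1}}^{X_{i_{1}}}\circ\cdots\circ\Psi_{t_{s}}^{X_{i_{s}}}(x_{0})$ as a composition of flows of the complete fields $e^{\mp tD}X_{i_{j}}$ applied to $x_{0}$, yields completeness of $\mathcal{F}$ along the way and should be recorded. Second, the sign conventions must be aligned: $(\Psi_{t}^{\mathcal{F}})_{\ast}X=e^{-t\,\mathrm{ad}_{\mathcal{F}}}X$ while $d\Phi_{t}\mid_{e}=e^{tD}$, so $\mathcal{X}$ (or $D$) must be normalized so that the two exponentials match in the comparison of the two sides.
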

\begin{proof} By Theorem \ref{teo} we need define a diffeomorphism  $\xi$
that carries  $\Lambda$ in a linear control system
$\widetilde{\Lambda}$ on a homogeneous space. Now we define this
homogeneous space, by Theorem \ref{difeo} we take $G$ the
  connected and simply connected Lie group with Lie algebra $\mathcal{L}(\Gamma)$.
  Note that $G$ acts transitively on $M$. From this action,
  take  $H\subset G$, the isotropy subgroup in
     $x_{0}\in M$, then we have the diffeomorphism
$\xi: M \rightarrow G/H$ given by $\xi (g \cdot x_{0})=gH$, where
$\cdot$ denotes the action of $G$ on $M$. Hence, we need to show
that when we apply  $\xi_{\ast}$ in $(\Lambda)$ we get a linear
control system on $\frac{G}{H}$, i.e., $\xi_{\ast}(\mathcal{F})$ is
a linear vector field and $\xi_{\ast}(Y_{j})$ is right invariant
vector field for $i=\{1, \ldots ,m\}$.

 As $\xi$ is a
diffeomorphism, then  $\xi_{\ast}(Y_{j})$ and $Y_{j}$ are
$\xi$-related. Then, as $Y_{j}$ is invariant we have that
${\pi}_{\ast}(Y_{j})$ is invariant on  $G/H$. Moreover, by Lemma
\ref{lema} we have that ${\pi}_{\ast}(Y_{j})={\xi}_{\ast}(Y_{j})$,
for all $X\in\mathfrak{g}$, therefore $\xi_{\ast}(Y_{j})$ is
invariant on $G/H$.

We need to show that $\xi_{\ast}(\mathcal{F})$ is a linear vector
field, i.e., $\xi_{\ast }(\mathcal{F})$ is $\pi$-related with a
linear vector field on $G$. First, we find this linear vector field
on $G$. By Lemma \ref{lema}, if $X\in \mathfrak{g}$ then
\begin{eqnarray}\label{0}[\xi_{\ast }(\mathcal{F}),\pi_{\ast
}(X\mathcal{})]=\pi_{\ast}[\mathcal{F} ,X]. \end{eqnarray}

Let $D:\mathfrak{g}\longrightarrow \mathfrak{g}$ be a derivation
defined by  $D(X)=[\mathcal{F},X]$. As $G$ is connected and simply
connected,  there exists a linear vector field $\mathcal{X}$ on $G$
such that $D(X)=[\mathcal{X},X],\ \forall X\in \mathfrak{g}.$

Then we prove that $\xi_{\ast}(\mathcal{F})$ is $\pi-$related with
$\mathcal{X}$. To do this, we prove that $\pi_{\ast}(\mathcal{X})$
is $\pi-$related with $\mathcal{X}$ and then we show that
$\pi_{\ast}(\mathcal{X})=\xi_{\ast}(\mathcal{F})$.

 Hence we first show that $H$ is invariant by the flow
  ${\phi}_{t}$ of $\mathcal{X}$. Note that the vector field
   $\xi_{\ast}(\mathcal{F})$ in the point $H \in
G/H$, $\xi_{\ast}(\mathcal{F})_{H}$,
 is equal to \begin{eqnarray}\label{4}d\xi \mid _{x_{0}}(\mathcal{F}_{x_{0}})=d\xi \mid
_{x_{0}}(0)=0,\end{eqnarray} since $\xi (x_{0})=\xi (1 \cdot
x_{0})=1H=H$.

Note also that, $\pi_{\ast}(Y)_{H}=0$ for all $Y$ in the Lie algebra
$\mathfrak{h}$ of $H$. In fact, as $Y\in \mathfrak{h}$  then $\exp
(tY)\in H$, for all $t\in \mathbb{R}$. So, $\exp (tY) \cdot
x_{0}=x_{0}$, for all $t\in \mathbb{R}$. Hence, $\pi _{\ast
}(Y)_{H}=\frac{d}{dt}\mid _{t=0}(\exp (tY) \cdot H)=\frac{d}{dt}\mid
_{t=0}(H)=0$. Therefore, as $ \pi _{\ast }(Y)_{H}=0$ and $\xi _{\ast
}(\mathcal{F})_{H}=0$, we have that
\begin{eqnarray} [\xi _{\ast }(\mathcal{F}),\pi _{\ast
}(Y)]_{H}=0, \forall Y\in \mathfrak{h}.\nonumber\end{eqnarray}

Note that $\pi _{\ast }[\mathcal{X},Y]_{H}=0$. Hence, its flow given
by $gH \mapsto ({\rm exp}t[\mathcal{X},Y]) gH$ satisfies $({\rm
exp}t[\mathcal{X},Y]) \cdot H=H$, for all $t\in \mathbb{R}$.
Therefore, ${\rm exp}t[\mathcal{X},Y] \in H$, then
$D(Y)=[\mathcal{X},Y]\in \mathfrak{h}, \forall Y\in \mathfrak{h}.$

This implies  that
 \[ \phi _{t}(\exp Y)=\exp (e^{tD}Y)= \exp (I + tD + \frac{t^{2}D^{2}}{2!} + \cdots )Y  \in H . \]

 Then,  $\phi _{t}(\exp Y)\in H$, $\forall t\in
\mathbb{R}$ e $\forall Y\in \mathfrak{h}$. As $M$ is connected,
simply connected and diffeomorphic to $\frac{G}{H}$, it follows that
$\frac{G}{H}$ is simply connected. Then $H$ is connected. Hence,
every element of  $H$ is product of exponentials of elements of
$\mathfrak{h}$ and as  $\phi _{t}$  is an isomorphism then $H$ is
invariant by the flow $\phi _{t}$. Consequently,
 $\pi _{\ast }(\mathcal{X})$ is a vector field on $G/H$ $\pi $-related with  $
\mathcal{X}$.

 To conclude the proof, we show that  $\pi _{\ast }(\mathcal{X})=\xi _{\ast }(
\mathcal{F})$. In fact, if  $X\in \mathfrak{g}$, then $[\xi _{\ast
}(\mathcal{F}),\pi _{\ast }(X)]= \pi _{\ast}[\mathcal{X}, X].$ Note
that  $[\pi _{\ast}(\mathcal{X}),\pi _{\ast}(X)]$ and
$[\mathcal{X},X]$ are $\pi$-related, hence  $\pi
_{\ast}[\mathcal{X}, X]=[\pi _{\ast }( \mathcal{X}),\pi _{\ast
}(X)]$, therefore $[\pi _{\ast }(\mathcal{X})-\xi _{\ast
}(\mathcal{F}),\pi _{\ast }(X)]=0,\forall X\in \mathfrak{g} .$

Then the flow of  $\pi _{\ast }( \mathcal{X})-\xi _{\ast
}(\mathcal{F})$ on $G/H$, denoted by $ \alpha _{t}$, commute with
the flow of $\pi _{\ast }(X)$, given by $gH \mapsto ({\rm exp}tX)
gH$.

As  $\mathcal{X}$  is linear, then $\pi _{\ast
}(\mathcal{X)}_{H}=0$. Moreover, from (\ref{4}) we have
$\xi_{\ast}(\mathcal{F})_{H}=0$, then $\pi _{\ast
}(\mathcal{X})_{H}=\xi _{\ast }(\mathcal{F})_{H}=0$. Hence, $(\pi
_{\ast }(\mathcal{X})-\xi _{\ast }(\mathcal{F}))_{H}=0$, so $\alpha
_{t}(H)=H,$  $\forall t\in \mathbb{R}$.

Consider, $g\in G$, as $G$ is connected, there exist
$Y_{i_{1}},\ldots ,Y_{i_{r}}\in\mathfrak{g}$ e $t_{i_{1}},\ldots
,t_{i_{r}}\in\mathbb{R}$ such that $g=\exp
(t_{i_{1}}Y_{i_{1}})\cdots \exp (t_{i_{r}}Y_{i_{r}})$. Then
\[
\alpha _{t}(gH)=gH, \forall t\in \mathbb{R}.
\]

 Therefore, $(\pi _{\ast }(\mathcal{X})-\xi _{\ast }(\mathcal{F}))_{gH}=0$,
i.e., $\pi _{\ast }(\mathcal{X})=\xi _{\ast }(\mathcal{F})$.
\end{proof}

In the previous theorem the hypothesis $M$ simply connected is
fundamental. In fact, this implies that $H$ is connected. Then,
consequently every element of $H$ is a product of exponentials of
elements of $\mathfrak{h}$. Now we show that a generalization of
this last theorem, where it is not necessary has  $M$ simply
connected. To get this, the concept of universal covering is
essential. Then, recall that given a universal covering $f:\tilde{M}
\rightarrow M$, where $\tilde{M}$ is a differential manifold such
that  $f$ is differentiable. We can lift the vector fields on $M$ to
$\tilde{M}$, that is, given  $Z \in TM$, the vector field $\tilde{Z}
\in T\tilde{M}$ is defined in the following way. Given $\tilde{x}
\in \tilde{M}$, as $f$ is differentiable covering, there exist open
neighborhoods    $\tilde{U}$ of $\tilde{x}$ in $\tilde{M}$ and $U$
of $x$ in $M$ such that $f{\mid}_{\tilde{U}}: \tilde{U} \rightarrow
U$ is diffeomorphism. Then we define
\[
\tilde{Z}_{\tilde{x}}=d(f{\mid}_{\tilde{U}})^{-1}{\mid}_{x}(Z_{x}).
\]

Consider  $M$ a differentiable connected manifold and take in $M$
the generalized linear control system
\[ (\Lambda)\mbox{ \ \ \ \ \ \ \ \ \ \ \ \ \ \
}\dot{x}=\mathcal{F}(x)+\sum_{j=1}^{m}u_{j}Y_{j}(x) .
\]
Then we have the following theorem
\begin{theorem}
Suppose that the family of vector fields  $\Gamma=\{Y_{1},\ldots
,Y_{m}\}$ is transitive on  $M$. Then the action $(M,S_{\Lambda })$
is orbit equivalent to a semigroup action associated to a linear
control system on a homogeneous space.
\end{theorem}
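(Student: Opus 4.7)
The plan is to reduce the problem to the simply connected case by passing to the universal cover of $M$, and then to descend the resulting linear control system along the covering.

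Let $f\colon \tilde{M}\to M$ be the universal covering, and lift $\mathcal{F},Y_1,\ldots,Y_m$ through the local diffeomorphism $f$ to vector fields $\tilde{\mathcal{F}},\tilde{Y}_1,\ldots,\tilde{Y}_m$ on $\tilde{M}$. Brackets are preserved by the lift, so $\mathcal{L}(\tilde{\Gamma})\cong \mathcal{L}(\Gamma)$ is finite-dimensional; the bracket hypothesis $[\tilde{\mathcal{F}},\tilde{X}]\in \mathcal{L}(\tilde{\Gamma})$ is inherited; the $\tilde{Y}_i$ are complete because their flows cover the complete flows of the $Y_i$; and transitivity of $\tilde{\Gamma}$ on $\tilde{M}$ follows pointwise from that of $\Gamma$ on $M$. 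Choosing $\tilde{x}_0\in f^{-1}(x_0)$ gives $\tilde{\mathcal{F}}(\tilde{x}_0)=0$, so the lifted system $(\tilde{\Lambda})$ is a generalized linear control system on the simply connected manifold $\tilde{M}$ with transitive family $\tilde{\Gamma}$. The previous theorem then furnishes a linear control system $(\tilde{\Sigma})$ on $G/H_0$, where $G$ is the simply connected Lie group with Lie algebra $\mathcal{L}(\Gamma)$ and $H_0$ is the isotropy of $\tilde{x}_0$ under the resulting $G$-action on $\tilde{M}$, such that $(\tilde{M},S_{\tilde{\Lambda}})$ is orbit equivalent to the semigroup action on $G/H_0$; moreover, its drift is the projection of a linear vector field $\mathcal{X}$ on $G$ whose flow $\phi_t$ preserves $H_0$.

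Let $H$ be the isotropy of $x_0$ under the $G$-action on $M\cong G/H$ given by Theorem \ref{difeo}. A standard argument identifies $H_0$ with the identity component of $H$, so $f$ corresponds to the canonical projection $G/H_0\to G/H$ whose deck group $H/H_0\cong \pi_1(M)$ acts on the right. The principal obstacle is that the argument of the previous theorem only yields $\phi_t(H_0)=H_0$, whereas for $\mathcal{X}$ to descend to a linear vector field on $G/H$ I need $\phi_t(H)=H$. For this I exploit the characteristic feature of the universal cover: the flow $\Psi^{\tilde{\mathcal{F}}}_t$ is the unique lift of $\Psi^{\mathcal{F}}_t$ and therefore commutes with every deck transformation. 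Transporting this commutation through the diffeomorphism $\tilde{M}\cong G/H_0$, the descended flow $gH_0\mapsto \phi_t(g)H_0$ commutes with right multiplication by each $h\in H$. Using that $\phi_t$ is a group automorphism of $G$, the identity $\phi_t(gh)H_0=\phi_t(g)hH_0$ forces $\phi_t(h)\in hH_0\subseteq H$ for all $h\in H$ and $t\in \mathbb{R}$, so $\phi_t(H)=H$ as required.

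With $\mathcal{X}$ now $\pi$-projectable for $\pi\colon G\to G/H$, the same argument used in the previous theorem, combined with Lemma \ref{lema}, yields $\xi_\ast(\mathcal{F})=\pi_\ast(\mathcal{X})$ and $\xi_\ast(Y_j)=\pi_\ast(Y_j)$, where $\xi\colon M\to G/H$ is the diffeomorphism of Theorem \ref{difeo}. Hence $(\Lambda)$ is diffeomorphic via $\xi$ to a linear control system $(\widetilde{\Lambda})$ on $G/H$ whose drift is $\pi_\ast(\mathcal{X})$ and whose controlled vector fields are the invariant fields $\pi_\ast(Y_j)$. By Proposition \ref{state} the two systems are state equivalent, hence topologically conjugate, so Proposition \ref{general} delivers the desired orbit equivalence of $(M,S_\Lambda)$ with the semigroup action of $(\widetilde{\Lambda})$ on $G/H$.
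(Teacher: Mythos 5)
Your proposal is correct and follows the same overall strategy as the paper: lift $(\Lambda)$ to the universal cover $\tilde{M}$, check that the lifted system is again a generalized linear system with transitive family, invoke the simply connected case to produce the linear vector field $\mathcal{X}$ on $G$ and the linear system on $G/\tilde{H}$ (your $G/H_0$), and then descend along the covering $G/\tilde{H}\to G/H$. The two proofs diverge only at the two key technical steps, and in both places your route is different but sound. For the invariance of $H$ under the flow $\phi_t$ of $\mathcal{X}$, you argue via the uniqueness of lifts: the lifted flow commutes with the deck transformations of $f$, which under $\tilde{\xi}$ become the right translations by $H/\tilde{H}$, forcing $\phi_t(h)\in h\tilde{H}\subseteq H$. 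The paper gets the same conclusion more economically from the observation (already recorded in its proof) that $\tilde{\mathcal{F}}$ vanishes at \emph{every} point of $f^{-1}(x_0)$, so $\tilde{\pi}_{\ast}(\mathcal{X})=\tilde{\xi}_{\ast}(\tilde{\mathcal{F}})$ vanishes on all of $H/\tilde{H}$ and the flow $g\tilde{H}\mapsto\phi_t(g)\tilde{H}$ fixes $H/\tilde{H}$ pointwise, giving $\phi_t(h)\in h\tilde{H}$ directly; your deck-transformation argument is more conceptual but requires the extra verification that $\tilde{\xi}$ intertwines the two deck actions, which you correctly flag via $\xi\circ f=l\circ\tilde{\xi}$. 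For the final identity $\xi_{\ast}(\mathcal{F})=\pi_{\ast}(\mathcal{X})$, you re-run the commutator/flow argument of the simply connected case on $G/H$ (legitimate, since simple connectedness was only used there to get $H$ connected, which you have replaced by the direct invariance proof), whereas the paper instead pushes the already-established identity $\tilde{\xi}_{\ast}(\tilde{\mathcal{F}})=\tilde{\pi}_{\ast}(\mathcal{X})$ down through the covering $l:G/\tilde{H}\to G/H$ using that $l$ is a local diffeomorphism. Both routes close the argument; yours buys independence from the descent computation at the cost of repeating the commutator argument, while the paper's descent is shorter once the covering $l$ is in place.
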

\begin{proof}

Let $f:\tilde{M} \rightarrow M$ be the above differentiable
covering. Then from $\Lambda$ we define the following system in
$\tilde{M}$:
\[ \tilde{(\Lambda)} \mbox{ \ \ \ \ \ \ \ \ \ \ \ \ \ \
}\dot{x}=\tilde{\mathcal{F}}(x)+\sum_{j=1}^{m}u_{j}\tilde{Y}_{j}(x)
,
\]
where $\tilde{\mathcal{F}}$ and $\tilde{Y}_{j}$ are as defined
above.

Consider $\tilde{\Gamma}=\{\tilde{Y}_{1},\ldots ,\tilde{Y}_{m}\}$.
By definition of  $\tilde{Y}_{j}$ we have that the family
$\tilde{\Gamma}$ is complete and that $\mathcal{L}(\Gamma)$ is
isomorphic to  $\mathcal{L}(\tilde{\Gamma})$.

Note that $\tilde{\Gamma}$ is transitive. In fact,  every $f$-image
of orbit  is an orbit in $M$, moreover, the rank of $f$ is constant
in every orbit. As $\Gamma$ is transitive in $M$, the $\Gamma$-orbit
has the same dimension as $M$, and therefore, as $\tilde{\Gamma}$.
Then, the $\tilde{\Gamma}$-orbits in $\tilde{M}$ are submanifolds of
the same dimension of $\tilde{M}$. As $\tilde{M}$ is connected and
is the union of the $\tilde{\Gamma}$-orbits, it follows that exists
just one $\tilde{\Gamma}$-orbit. Therefore, $\tilde{\Gamma}$ is
transitive.

 Moreover, we have that
\[ [\tilde{\mathcal{F}},\tilde{Y}_{i}] \in \mathcal{L}(\tilde{\Gamma}),
\]
and as  $\mathcal{F}_{x_{0}}=0$ it follows that
$\tilde{\mathcal{F}}_{\tilde{x_{0}}}=0$ for all $\tilde{x_{0}} \in
f^{-1}(x_{0})$.

Consider the connected and simply connected Lie group $G$ with Lie
algebra  $\mathcal{L}(\Gamma)$ (and $\mathcal{L}(\tilde{\Gamma})$).

Then by Proposition  \ref{prop1}  we have   the actions
\[ G \times M \rightarrow M  \ \ \ \ \ \mbox{ and }  \ \ \ \ \ \ \ G \times \tilde{M} \rightarrow
\tilde{M} .
\]

Take  $\tilde{x_{0}} \in f^{-1}(x_{0})$ then we have the isotropy
subgroups
\[ H=\{g \in G; gx_{0}=x_{0} \} \ \ \ \ \mbox{ and } \ \ \ \ \ \ \ \ \
 \tilde{H}=\{g \in G; g\tilde{x}_{0}=\tilde{x}_{0}\} .
\]

Hence we have the  diffeomorphisms $\xi: M \rightarrow G/H$ given by
$\xi (g \cdot x_{0})=gH$ and $\tilde{\xi}: \tilde{M} \rightarrow
G/\tilde{H}$ with $\xi (g \cdot \tilde{x}_{0})=g\tilde{H}$,
 here $\cdot$ denote the action of $G$ on $M$ or $\tilde{M}$. As
$\tilde{M}$ is simply connected then $\tilde{H}$ is connected. As we
see in the demonstration of the previous result, we have that
$\tilde{\Lambda}$ is diffeomorphic to linear control system on
$G/\tilde{H}$. Now we describe this system on  $G/\tilde{H}$.

Consider $D:\mathcal{L}(\tilde{\Gamma}) \rightarrow
\mathcal{L}(\tilde{\Gamma})$ given by
$D(Y)=[\tilde{\mathcal{F}},Y]$, note that $D$ is derivation. Then
there exists a linear vector field  $\mathcal{X}$ on $\tilde{G}=G$
such that $D(Y)=[\mathcal{X},Y]$ for every $Y \in
\mathcal{L}(\tilde{\Gamma})$. Let $\tilde{\pi}:G \rightarrow
G/\tilde{H}$ be the canonical projection. By previous result, we
have that  $\tilde{\Lambda}$ is diffeomorphic to the following
linear control system in $G/\tilde{H}$:

\[ ({\Lambda}_{\tilde{\pi}}) \mbox{ \ \ \ \ \ \ \ \ \ \ \ \ \ \
}\dot{x}={\tilde{\pi}}_{\ast}(\mathcal{X})+\sum_{j=1}^{m}u_{j}{\tilde{\pi}}_{\ast}(\tilde{Y}_{j})
,
\]
where
${\tilde{\pi}}_{\ast}(\mathcal{X})=\tilde{\xi}_{\ast}(\tilde{\mathcal{F}})$
and
${\tilde{\pi}}_{\ast}(\tilde{Y}_{j})=\tilde{\xi}_{\ast}(\tilde{Y_{j}})$.

Note that $\tilde{\pi}(\mathcal{X})$ exists, i.e., o que é
$\tilde{H}$ is invariant by the flow of $\mathcal{X}$.

It is not difficult to see that $l:G/{\tilde{H}} \rightarrow G/H $
defined by $l(g{\tilde{H}})=gH$ is a differentiable covering.

Recall that we need to show that
 $\xi_{\ast}(\mathcal{F})$ is linear vector field on $G/H$
 and that $\xi_{\ast}(Y_{j})$ are right invariant vector field for
  $i=\{1, \ldots ,m\}$. As $Y_{j}$ is invariant we have that
  ${\pi}_{\ast}(Y_{j})$ is invariant  on $G/H$. By Lemma \ref{lema},
  we have that ${\pi}_{\ast}(Y_{j})=\xi_{\ast}(Y_{j})$, then
   $\xi_{\ast}(Y_{j})$ are invariants.

  The vector field  $\xi_{\ast}(\mathcal{F})$ is
linear if
 $\xi_{\ast }(\mathcal{F})$ is $\pi$-related with a linear vector field on
 $G$. Then, we first show that  $\pi_{\ast }(\mathcal{X})$ is   linear on $G/H$,
 i.e., $\mathcal{X}$ is $\pi$-related with $\pi_{\ast
}(\mathcal{X})$ in $G/H$. After this, we prove that $\xi_{\ast
}(\mathcal{F})=\pi_{\ast }(\mathcal{X})$.

First we note that $\xi_{\ast }(\mathcal{F})$ is null in
$H/\tilde{H}$. As $\tilde{\xi_{\ast
}}(\tilde{\mathcal{F}})=\tilde{\pi_{\ast }}(\mathcal{X})$ then
$\tilde{\pi_{\ast }}(\mathcal{X})$ is null in $H/\tilde{H}$. Then we
can prove that $H$ is invariant by the flow of $\mathcal{X}$. So
$(\mathcal{X}$ is $\pi$-related with the vector field
$\pi_{\ast}(\mathcal{X})$ on $G/H$.

Now we must prove that $\xi_{\ast }(\mathcal{F})=\pi_{\ast
}(\mathcal{X})$. As
${\tilde{\mathcal{F}}}_{\tilde{x}}=d(f{\mid}_{\tilde{U}})^{-1}{\mid}_{x}({\mathcal{F}}_{x})$
and  $\tilde{\xi}$ and $\xi$ are diffeomorphisms it follows that
\[\tilde{\xi _{\ast}}({\tilde{\mathcal{F}}}){\mid}_{g \tilde{H}}=d(l{\mid}_{\tilde{V}})^{-1}(\xi _{\ast}({\mathcal{F}}) \mid _{gH})\]
and
\[\tilde{\pi _{\ast}}({\tilde{\mathcal{X}}}){\mid}_{g \tilde{H}}=d(l{\mid}_{\tilde{W}})^{-1}(\pi _{\ast}({\mathcal{X}}) \mid _{gH})\]
then $\xi_{\ast }(\mathcal{F})=\pi_{\ast }(\mathcal{X})$.

\end{proof}


\begin{thebibliography}{99}

\bibitem{as} Agrachev A., Sachkov Y. Control
Theory from the Geometric Viewpoint. Berlin: Springer 2004.

\bibitem{aysm} Ayala V., San Martin L.A.B. Controllability
properties of a class of control systems on Lie groups. Lectures
Notes in Control and Inform. Sci. 2001; 258: 83---92.

\bibitem{ayti} Ayala V., Tirao J. Linear control systems on Lie
groups and local controllability. Differential geometry and control
(G. Ferreyra, R. Gardner, H. Hermes, and H. Sussmann, Eds.) Amer.
Math. Soc., Providence, Rhode Island. 1999; 47-64.

\bibitem{ColK00} Colonius F., Kliemann W. The Dynamics of
Control. Boston: Birk\-häuser  2000.

\bibitem{elliott} Elliott D.L. Bilinear control systems: Matrices in
action. New York: Springer 2009.

\bibitem{ellis} Ellis R. Cocycles in topological dynamics. Topology.
 1978; 17: 111--130.

\bibitem{jurdjevic} Jurdjevic V. Geometric  control theory. Cambridge:
Cambridge University Press 1997.

\bibitem{RoSaSacone} Rocio O.G., San Martin, L. A. B. and Santana A. J.  Invariant
cones and convex sets for bilinear control systems and parabolic
type of semigroups. J. Dyn. Control Syst. 2006; 12: 419-432.

\bibitem{RoSaVe} Rocio O.G., Santana A. J. and Verdi M. A. Semigroups of Affine
Groups, Controllability of Affine Systems and Affine Bilinear
Systems in ${\rm Sl}(2,{\Bbb R})\rtimes\mathbb{R}^{2}$. SIAM J.
Control Optim. 2009; 48: 1080-1088.

\bibitem{sachkov} Sachkov Y.L. Control theory on Lie groups. J.Math. Sci .Adv. Appl. 2009; 156:
381-439.

\bibitem{josiney} Souza J.A. On limit behavior of skew-product
transformation semigroups. Math. Nachr. 2013; 287: 91-104.

\bibitem{jouan1} Jouan P. Equivalence of Control Systems with Linear
Systems on Groups and Homogeneous Spaces. ESAIM Control Optim. Calc.
Var. 2010; 16: 956-973.

\bibitem{markus} Markus L. Controllability of multi-trajectories on
Lie groups. Dynamical Systems and Turbulence. 1980; 898: 250--265.

\bibitem{SM2} San Martin L.A.B. Grupos de Lie. Unpublished, 2014.

\bibitem{san martin} San Martin L.A.B. Invariant Control Sets on Flag
Manifolds. Math. Control Signals Systems. 1993; 6: 41-61.

\bibitem{san martin e tonelli} San Martin L.A.B., Tonelli P.A. Semigroup
Actions on Homogeneous Spaces. Semigroup Forum 1995; 50: 59-88.

\end{thebibliography}
\end{document}